\numberwithin{equation}{section}
\newtheoremstyle{ttheorem}%
       {1.8ex\@plus1ex}                % space above
       {2.1ex\@plus1ex\@minus.5ex}      % space below
       {\itshape}           % body font
       {0pt}                   % indent amount          
       {\bfseries}          % Theoremhead font 
       {.}                  % Punctuation after theorem head
       {.5em}               % Space after theorem head
       {}                % Theorem head spec (can be left empty: `normal')
\newtheoremstyle{ddefinition}%
       {1.8ex\@plus1ex}                % space above
       {2.1ex\@plus1ex\@minus.5ex}      % space belowi 
       {}           % body font
       {0pt}                   % indent amount
       {\bfseries}           % Theoremhead font 
       {.}                  % Punctuation after theorem head
       {.5em}               % Space after theorem head
       {}                % Theorem head spec (can be left empty: `normal')
\newtheoremstyle{rremark}%
       {1.8ex\@plus1ex}                % space above
       {2.1ex\@plus1ex\@minus.5ex}      % space belowi 
       {\normalfont}        % body font
       {0pt}                   % indent amount 
       {\bfseries}           % Theoremhead font 
       {.}                  % Punctuation after theorem head
       {.5em}               % Space after theorem head
       {}                   % Theorem head spec (can be left empty: `normal')
\theoremstyle{ttheorem}
\newtheorem{theorem}{Theorem}[section]
\newtheorem{lemma}[theorem]{Lemma}
\newtheorem{proposition}[theorem]{Proposition}
\theoremstyle{ddefinition}
\newtheorem{definition}[theorem]{Definition}
\theoremstyle{rremark}
\newtheorem{remark}[theorem]{Remark}
\newtheorem{myremarks}[theorem]{Remarks}
\newcommand{\oplam}{\mbox{\Large $\curlywedge$}}
\newenvironment{remarks}{\begin{myremarks}\begin{nummer}}%
    {\end{nummer}\end{myremarks}}
    {\end{nummer}\end{myexamples}}
\newcounter{numcount}
\newcommand{\labelnummer}{\mbox{\normalfont (\roman{numcount})}}%
\newenvironment{nummer}%
  {\let\curlabelspeicher\@currentlabel%
    \begin{list}{\labelnummer}%
      {\usecounter{numcount}\leftmargin0pt%
        \topsep0.5ex\partopsep2ex\parsep0pt\itemsep0ex\@plus1\p@%
        \labelwidth2.5em\itemindent3.5em\labelsep1em%
      }%
    \let\saveitem\item%
    \def\item{\saveitem%
      \def\@currentlabel{{\upshape\curlabelspeicher}$\,$\labelnummer}}%
    \let\savelabel\label%
    \def\label##1{\savelabel{##1}%
      \@bsphack%
        \ifmmode\else%
          \protected@write\@auxout{}%
          {\string\newlabel{##1item}{{\labelnummer}{\thepage}}}%
        \fi%
      \@esphack%
    }%
  }{\end{list}}%
\begin{document}

%%%%%%%%%%%%%%%%%%%%%%%%%%%%%%%%%%%%%%%%%%%%%%%%%%%%%%%%%%%%%%
%%%%%%%%%%%%%%%%%%%%%%%%%%%%%%%%%%%%%%%%%%%%%%%%%%%%%%%%%%%%%%

\title{On pattern entropy of weak model sets}

\begin{abstract}
We study point sets arising from cut-and-project constructions. 
An important class is that of weak model sets, which include squarefree numbers and visible
lattice points. For such model sets, 
we give a non-trivial upper bound on their pattern entropy in
terms of the volume of the window boundary in internal space. This
proves a conjecture by R.\ V.\ Moody.
\end{abstract}

\author[C.\ Huck]{Christian Huck}
\address{Fakult\"at f\"ur Mathematik,
  Universit\"at Bielefeld,
  Postfach 100131,
  33501 Bielefeld, Germany}
\email{huck@math.uni-bielefeld.de}

\author[C.\ Richard]{Christoph Richard}        
\address{Department f\"ur Mathematik,
  Friedrich-Alexander-Universit\"at Erlangen-N\"urnberg,
  Cauerstra{\ss}e 11,
  91058 Erlangen, Germany}
\email{christoph.richard@fau.de}

\dedicatory{to the memory of Peter A.\ B.\ Pleasants\/\dag}

\maketitle

\section{Introduction}

Baake, Moody and Pleasants \cite{bmp00} gave a cut-and-project construction for the visible points of an $n$-dimensional lattice in Euclidean space and the
\mbox{$k^{\rm th}$-power}-free numbers, with the internal space
adelic, instead of Euclidean as in more usual cut-and-project
sets. This generalised a cut-and-project construction
of squarefree numbers by Meyer \cite{me73}. In these constructions, the boundaries of the windows have positive Haar measure, however, so the corresponding points sets are not regular model sets. In particular,
results about diffraction of regular model sets could not be applied to these point sets, and their pure point diffractivity was shown by explicit calculation in \cite{bmp00, ph13}. We  mention the monograph \cite{bg13} for a modern comprehensive exposition of the subject. Recently, there has been a renewed interest in such non-regular model sets due to their rich combinatorial and dynamical properties, see \cite{cs13,bh14} and references therein.

Unexpectedly, in these examples the density of the cut-and-project set was seen to be equal to the volume of the window. This relationship,
also called the density formula, was known to hold for regular model sets, if the Haar measure of the embedding space is normalised such that the underlying lattice has density one.  Whereas special cases go back to \cite{me72}, the density formula had been proved for all regular model sets by Schlottmann \cite{sch98}. But for the above examples, it was also pointed out in \cite{bmp00} that translating the window can
cause the density of the corresponding model set to change and can, indeed,
make the model set vanish altogether.  
To be more explicit about these windows:
they are closed but their complements are dense in the internal space, and
as a consequence the boundary of a window coincides with the window itself.
Moody \cite{m02} has since proved the surprising result
that for a very general class of cut-and-project sets, which he calls weak model sets, 
 the density of the model set is indeed equal to the measure of the window
for \textit{almost} all translations of the window. Also, in the model set
description of the above examples or, more generally, of the $k$-free
 points of a lattice \cite{p06,ph13}, their pattern entropy turns out to be equal to the measure
of the corresponding window boundary. Since translating the window can result in an
empty model set, this again is not a relationship that can always hold.

Moody \cite{p06} has suggested that the relationship between the
pattern entropy and the window boundary may be akin to that
between the density and the window itself. We will consider this
question for  weak model sets, which have initially been studied by 
Schreiber \cite{schr71, schr73}. As pointed out by Pleasants \cite{p06}, there is a version of the 
density formula which holds for any translation of the window in that situation. (In fact, such a version 
was already used by Meyer  \cite[Rem.~(6.2)]{me73}.)
If $W$ is the window and $\Lambda$ the model set derived from it, then
\begin{displaymath}
\theta_H(\mathrm{int}(W))\le\underline{\mathrm{dens}}(\Lambda)\le\overline{\mathrm{dens}}(\Lambda)\le\theta_H(\mathrm{cl}(W)),
\end{displaymath}
where $\underline{\mathrm{dens}}(\Lambda)$ and $\overline{\mathrm{dens}}(\Lambda)$ are the lower and upper
densities of $\Lambda$, and $\theta_H$ is the unique Haar measure on internal space according to the normalisation explained above. 
So in this more general setting
the density too, when it exists, is confined to an interval determined
by the window, and this formula holds for any translation of the window.
Concerning pattern entropy $h^*(\Lambda)$ of a weak model set, which will be defined below, the measure
of the window boundary gives indeed an upper bound,
\begin{displaymath}
h^*(\Lambda)\le \theta_H(\partial W) \log 2,
\end{displaymath}
for any translate of the window.  As suggested by Pleasants \cite{p06}, both results can be proved 
by approximation with regular model sets using the density formula for regular model sets.

Following this route, we review weak model sets in Section~\ref{sec:wms}. This is done in detail as -- in contrast to (full) model sets -- previous results on weak model sets are somewhat scattered through the literature, using different terminologies. We will prove the extension of the density formula mentioned above in Section~\ref{sec:df}. Then we consider pattern entropy. In Euclidean space, this quantity has previously been studied in \cite{l99,lp03}, where it is called configurational entropy, and in \cite{blr07}, where it is called patch counting entropy. Corresponding complexity measures for model sets have also been studied  in \cite{j10}. We will discuss these approaches.
 We will prove the above entropy bound for weak model sets in Theorem~\ref{thm:main} for a non-compact direct space.  As we will point out, our arguments do not rely on commutativity of the underlying groups, if they are assumed to be second countable. We would also like to stress that our approach is geometric and avoids dynamical systems. There are however important connections between pattern entropy and topological entropy of the so-called hull associated with the model set, which we will indicate below. Section~\ref{sec:sl} specialises to subsets of lattices and briefly discusses examples where the entropy bound is sharp. Our cut-and-project scheme is taken from Sing \cite[Sec.~5a]{sing07}. It avoids adelic internal spaces and thus simplifies previous analyses. 

\section{Weak model sets}\label{sec:wms}

A model set is a projection of a certain lattice subset. We recall the relevant framework following \cite{bg13, m97} and \cite{me72, schr73}. We use the abbreviation LCA group for a locally compact Hausdorff abelian group. In the following we prefer multiplicative notation for the group operation since the arguments and results in this paper do not rely on commutativity. 

\begin{definition}[Cut-and-project scheme]\label{def:cpscheme}
A \emph{cut-and-project scheme} is a triple $(G, H,\mathcal L)$ with $\sigma$-compact LCA groups $G,H$, and a discrete subgroup $\mathcal L$ of $G\times H$ such that $(G\times H)/{\mathcal L}$ is compact.  It is assumed that the canonical projections $\pi_G:G\times H\to G$, $\pi_H:G\times H\to H$ satisfy $\pi_G|_{\mathcal L}$ is one-to-one and $\pi_H(\mathcal L)$ is dense in $H$. We call $G$ \textit{direct space} and $H$ \textit{internal space}.
\end{definition}

\begin{remarks}\label{rem:fdom}
\item Many relevant examples have Euclidean direct and internal space. For weak model sets, which are defined below, one can assume without loss of generality that the internal space is second countable, see Remark~\ref{rem:wms}. Note that in locally compact Hausdorff spaces, second countability is equivalent to $\sigma$-compactness and metrisability.
\item  The discrete group $\mathcal L$ is a lattice in $G\times H$, i.e., $(G\times H)/{\mathcal L}$ admits a non-trivial finite invariant regular Borel measure \cite[Prop.~9.1.5]{de09}. The lattice $\mathcal L$ is countable since $G,H$ are $\sigma$-compact. We fix (left) Haar measures $\theta_G, \theta_H$ on $G,H$ and choose the product measure as Haar measure on $G\times H$. By \cite[Lem.~2]{kk98}, the lattice $\mathcal L$ has measurable relatively compact fundamental domains, whose common finite measure we denote by $1/\mathrm{dens}(\mathcal L)$. In fact, $\mathrm{dens}(\mathcal L)$ is the (canonically defined) density of lattice points in $G\times H$.
\end{remarks}

Writing $L:=\pi_G(\mathcal L)$, the canonical map $\star:L\to H$ is
called the \textit{star map}. We thus have $\mathcal L=\{(\ell,\ell^\star)\,|\,\ell \in L\}$.
Since $L^\star$ is dense in $H$, the lattice $\mathcal L$ admits
fundamental domains within arbitrarily thin strips $G\times U$ with
non-empty open $U$. This is in line with \cite[Lem.~II.10]{me72}, \cite[Lem.~2.5]{m97}, \cite[Lem.~7.4]{bg13}. As it is central for our entropy estimate, we also give its short proof.

\begin{lemma}\label{lem:smallfd}
Let $(G, H,\mathcal L)$ be a cut-and-project scheme. Then for
any non-empty open $U\subset H$ there exists a compact $F\subset G$  satisfying
\begin{displaymath}
(F\times U) \mathcal L=G\times H.
\end{displaymath}
\end{lemma}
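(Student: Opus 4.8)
The plan is to reduce to a fixed compact fundamental region, thin out its internal-space factor to the prescribed open set $U$ by exploiting the density of $L^\star$, and then recombine using that the star map $\star\colon L\to H$ is a group homomorphism.

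First I would produce a compact \emph{product} set covering the quotient. Since $G\times H$ is locally compact and the quotient map $p\colon G\times H\to (G\times H)/\mathcal L$ is open, continuous and surjective onto a compact space, one can cover $G\times H$ by relatively compact open sets, pass to a finite subcover of the quotient, and take the union of the corresponding closures to obtain a compact $C$ with $C\mathcal L=G\times H$. Replacing $C$ by $\pi_G(C)\times\pi_H(C)$, I may assume $C=C_G\times C_H$ with $C_G\subset G$ and $C_H\subset H$ compact and $(C_G\times C_H)\mathcal L=G\times H$.

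Next I would cover $H$ by translates of $U$ along $L^\star$. For any $h\in H$ the set $U^{-1}h$ is open and non-empty, so by density it meets $L^\star$; hence $h\in U\ell^\star$ for some $\ell\in L$, and therefore $H=\bigcup_{\ell\in L}U\ell^\star$. By compactness of $C_H$ there are finitely many $\ell_1,\dots,\ell_n\in L$ with $C_H\subset\bigcup_{i=1}^n U\ell_i^\star$. I then set

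\[
F:=\bigcup_{i=1}^n C_G\,\ell_i^{-1},
\]

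which is compact as a finite union of compact translates.

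Finally I would verify the covering. Given $(g,h)\in G\times H$, choose $\ell\in L$ with $g\in C_G\ell$ and $h\in C_H\ell^\star$. Picking $i$ with $h(\ell^\star)^{-1}\in U\ell_i^\star$ gives $h\in U(\ell_i\ell)^\star$, using $\ell_i^\star\ell^\star=(\ell_i\ell)^\star$; and $g(\ell_i\ell)^{-1}\in C_G\ell_i^{-1}\subset F$, so $g\in F(\ell_i\ell)$. With $m=(\ell_i\ell,(\ell_i\ell)^\star)\in\mathcal L$ this yields $(g,h)\in(F\times U)m$, as required. The only real subtlety is the homomorphism bookkeeping in this last step: one must let the \emph{same} lattice element act in both coordinates, and it is exactly the identity $\ell_i^\star\ell^\star=(\ell_i\ell)^\star$ that allows the finitely many internal-space shifts $\ell_i^\star$ to be absorbed into genuine lattice translations while keeping the direct-space factor inside the fixed compact set $F$. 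Producing the compact product region in the first step is routine but should be stated with care, since it is where $\sigma$-compactness and compactness of the quotient enter.
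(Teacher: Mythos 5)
Your proof is correct and takes essentially the same route as the paper's: fix a compact set whose $\mathcal L$-translates cover $G\times H$, use density of $L^\star$ to cover its internal-space part by finitely many translates of $U$, and absorb the finitely many shifts $\ell_i$ into the compact direct-space factor $F=\bigcup_{i=1}^n C_G^{}\ell_i^{-1}$. The only differences are cosmetic: you construct the compact covering region directly rather than citing the fundamental-domain remark, and you track the left/right translates (so your version is visibly side-consistent in the non-abelian setting).
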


\begin{proof}
Consider any non-empty open $U$ in $H$. As $\mathcal L$ is a lattice, we may choose compact $\mathcal F\subset G\times H$ such that $\mathcal F \mathcal L=G\times H$ by Remark~\ref{rem:fdom} (ii). Consider the compact set $A=\pi_G(\mathcal F)$.  Since $\pi_H(\mathcal F)$ is compact and $\pi_H(\mathcal L)$ is dense in $H$, there exist $\ell_1,\ldots, \ell_n\in L$ such that 
\begin{displaymath}
\mathcal F \subset A\times \bigcup_{i=1}^n\ell_i^\star U.
\end{displaymath}
With the compact set $F:=\bigcup_{i=1}^n \ell_i^{-1}A$, we compute $(F\times U) \mathcal L=G\times H$.
\end{proof}

We want to study certain subsets of $L$. For that reason, we fix a \textit{window} $W\subset H$ and define
a \textit{projection set} by
\begin{displaymath}
\oplam(W)=\{\ell \in L\,|\, \ell^\star\in W\}.
\end{displaymath}
Diagrammatically, 
the construction looks like this:
\[\begin{array}{ccccc}
G&\longleftarrow&G\times H &\longrightarrow&H\\
\cup&&\cup&&\cup\\ \oplam(W)&&\mathcal L&&W\\ \uparrow&&\uparrow&&\uparrow\\
\ell&\longleftrightarrow&(\ell,\ell^\star)&\longrightarrow&\ell^\star
\end{array}\]
If the star map were one-to-one (which we could assume without loss of generality \cite[Prop.~4]{schr73}), then any subset of $L$ could be described by some window $W$.

We list some properties of $\oplam(W)$. Recall that $D\subset G$ is \textit{uniformly discrete} if  there is a unit neighbourhood $U\subset G$ such that any of its translates $xU$, where $x\in G$, contains at most one point of $D$.  A set $D\subset G$ is \textit{relatively dense} if there is a compact set $K\subset G$ such that $DK=G$. If $D$ is uniformly discrete and relatively dense, then $D$ is called a \textit{Delone} set.

\begin{proposition}
If $W$ is relatively compact, then $\oplam(W)$ is uniformly discrete. \qed
\end{proposition}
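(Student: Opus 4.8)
The plan is to reduce uniform discreteness of $\oplam(W)$ to a statement about its difference set and then exploit the discreteness of $\mathcal L$. Writing $D:=\oplam(W)$, I first recall that in a topological group $D$ is uniformly discrete as soon as there is a unit neighbourhood $V\subset G$ meeting the difference set $D^{-1}D=\{\ell_1^{-1}\ell_2\mid \ell_1,\ell_2\in D\}$ only in the identity: choosing a unit neighbourhood $U$ with $U^{-1}U\subset V$, any translate $xU$ containing two points $\ell_1,\ell_2\in D$ would give $\ell_1^{-1}\ell_2=u_1^{-1}u_2\in U^{-1}U\subset V$ for suitable $u_i\in U$, whence $\ell_1=\ell_2$. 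So it suffices to produce such a $V$, and the argument stays valid without assuming commutativity.

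The key observation is that the difference set lands, under the star map, in one fixed compact set. Indeed, if $m=\ell_1^{-1}\ell_2$ with $\ell_1^\star,\ell_2^\star\in W$, then $m^\star=(\ell_1^\star)^{-1}\ell_2^\star\in W^{-1}W\subset \mathrm{cl}(W)^{-1}\mathrm{cl}(W)=:C$, and $C$ is compact since $W$ is relatively compact and $(a,b)\mapsto a^{-1}b$ is continuous. Hence $D^{-1}D\subset\{m\in L\mid m^\star\in C\}$, and it is enough to separate the identity from this larger set near $e_G$.

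To do so I would use that $\mathcal L$, being a discrete subgroup of the Hausdorff group $G\times H$, is closed and therefore locally finite. Fixing a relatively compact open unit neighbourhood $V'\subset G$ (available since $G$ is locally compact), the set $\mathcal L\cap(\mathrm{cl}(V')\times C)$ is the intersection of a closed discrete set with a compact set, hence finite; by injectivity of $\pi_G|_{\mathcal L}$ its image $S:=\{m\in L\mid m\in \mathrm{cl}(V'),\, m^\star\in C\}$ is a finite subset of $L$. Since $G$ is Hausdorff, $S\setminus\{e_G\}$ is closed, so $V:=V'\setminus(S\setminus\{e_G\})$ is a unit neighbourhood with the property that the only element of $L$ lying in $V$ with star-image in $C$ is $e_G$. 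In particular $V\cap(D^{-1}D)\subset\{e_G\}$, which is exactly what the first step requires, and combining the two steps finishes the proof.

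The main obstacle to watch is precisely that one cannot separate the difference points directly using the discreteness neighbourhood of $\mathcal L$ in $G\times H$: the star-images of differences fill out the whole compact set $C$ rather than a small neighbourhood of $e_H$, so discreteness in the product first has to be converted into local finiteness over $C$ before the Hausdorff separation in $G$ can be invoked. Once this conversion is made, everything else is routine manipulation in topological groups.
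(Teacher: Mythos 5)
Your proof is correct: the reduction of uniform discreteness to finding a unit neighbourhood $V$ with $V\cap D^{-1}D\subset\{e\}$, the inclusion $D^{-1}D\subset\{m\in L\,|\,m^\star\in \mathrm{cl}(W)^{-1}\mathrm{cl}(W)\}$ via the star homomorphism, and the conversion of discreteness of the closed subgroup $\mathcal L$ into finiteness of $\mathcal L\cap(\mathrm{cl}(V')\times C)$ are all sound, and you rightly note that nothing uses commutativity. The paper itself gives no proof but defers to the references \cite{schr73, m97, st14, bg13}, and your argument is essentially the standard one found there, e.g.\ \cite[Prop.~2.6]{m97}, so there is nothing to add.
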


\begin{remark}
See \cite[Prop.~2]{schr73} and \cite[Prop.~2.6]{m97}, \cite[Lem. 2.3.~ii)]{st14}, \cite[Prop.~7.5]{bg13} for proofs.
If $\Lambda\subset L$ is uniformly discrete, then $\Lambda^\star\subset H$ need not be relatively compact. It is straightforward to construct counterexamples within a cut-and-project scheme $(\mathbb R, \mathbb R,\mathcal L)$ with $\mathcal L$ a rotated copy of $\mathbb Z^2$.
\end{remark}

\begin{definition}[Weak model set]
Let $(G, H,\mathcal L)$ be a cut-and-project scheme and take $W\subset H$ relatively compact.  Then the projection set $\oplam(W)$, or any translate $t\oplam(W)$ with $t\in G$, is called a \emph{weak model set}.
\end{definition}

\begin{remarks}
\item Our notion of weak model set differs from that in \cite[Sec.~3]{m02}, where measurability of $W$ is required in addition. Our results below do not require measurability of $W$. 
\item Weak model sets $\oplam(W)$ have initially been studied by Schreiber in \cite{schr71,schr73}, where they are called models \cite[Def.~2]{schr73}. Every subset of a weak model set is a weak model set, possibly with a different internal space \cite[Cor.~2]{schr73}. Every weak model set is harmonious \cite[Thm.~1]{schr73}.  Note that Meyer sets \cite{m97} are relatively dense weak model sets.
\end{remarks}

\begin{remark}
Let $\oplam(W)$ be a weak model set. If  $\mathrm{int}(W)\ne\varnothing$, then $\oplam(W)$ is called a model set \cite[Def.~2.4]{m97}, and also any of its translates is called a model set \cite[Def.~7.2]{bg13}.
 If $W$ is open, then $\oplam(W)$ is called a model in \cite[Def.~4]{me72}. 
Following \cite{sch98}, we say that $W$ has \textit{almost no boundary} if $\theta_H(\partial W)=0$.
We call $\oplam(W)$ or any of its translates \textit{(measurably) regular} if $W$ has almost no boundary, compare \cite[Def.~7.2]{bg13} and \cite{r07}. For comparison, the so-called regular models of \cite[Def.~2]{me73} require compactness of $W$ in addition. We call $\oplam(W)$ \textit{generic} if $\partial W\cap L^\star=\varnothing$, compare  \cite[Def.~7.2]{bg13} and \cite{sch00}. A generic weak model set can always be obtained from a weak model set $\oplam(W)$ by a suitable shift of the window, compare \cite{bms98}. Following \cite[Def.~4.11]{r07}, we call  $\oplam(W)$ or any of its translates \textit{topologically regular} if $W=\mathrm{cl}(\mathrm{int}(W))$. We call $W$ \textit{aperiodic} if for every $h\in H$ the relation $hW=W$ implies $h=e$, see \cite[Def.~5.12]{r07}. The relevance of these notions for projection sets is discussed in \cite{bms98, sch98, sch00, r07, bg13}.
\end{remark}

\begin{remark}\label{rem:wms}
 Given a non-empty weak model set $\Lambda$, there exist different choices for internal space or a window.
One may restrict to aperiodic $H$ by factoring with the group of periods, see the proof of \cite[Prop.~5.1]{sch98}. If $W$ is a window for $\Lambda$, we always have $W\subset W'$ for some topologically and measurably regular window $W'$, see Remark~\ref{rem:anb}. By \cite[Prop.~5.1]{ms04}, this allows to apply an intrinsic construction of $H$ for $\oplam(W')$ using the so-called autocorrelation topology \cite{bm02}. It can be checked that $H$ is then in fact second countable, since the autocorrelation topology arises from a pseudo-metric. (Compare \cite[Sec.~6,8]{sch98} for a different construction of $H$.) Hence any weak model set admits a second countable internal space. If $W$ is a window for $\Lambda$, then $\Lambda^\star \subset W$.  Often one takes $W \subset \mathrm{cl}(\Lambda^\star)$, but unbounded windows may also be chosen. Compare also the discussion in \cite[Cor.~7]{sch98}.
\end{remark}
\begin{proposition}
Let $\oplam(W)$ be a weak model set with compact window $W$. Then $\oplam(W)$ is relatively dense if and only if\/ $\mathrm{int}(W)\ne\varnothing$.
\end{proposition}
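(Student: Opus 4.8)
The plan is to prove the two implications separately. The direction $\mathrm{int}(W)\neq\varnothing\Rightarrow\oplam(W)$ relatively dense is the standard Delone-type argument, and I would run it through Lemma~\ref{lem:smallfd}. Since inversion is a homeomorphism of $H$, the set $U:=(\mathrm{int}(W))^{-1}$ is non-empty and open, so the lemma yields a compact $F\subset G$ with $(F\times U)\mathcal L=G\times H$. Given any $x\in G$, I would decompose $(x,e)=(f,u)(\ell,\ell^\star)$ with $f\in F$, $u\in U$, $\ell\in L$; comparing second coordinates forces $\ell^\star=u^{-1}\in\mathrm{int}(W)\subseteq W$, so $\ell\in\oplam(W)$, while the first coordinate gives $x=f\ell\in\oplam(W)F$. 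Hence $\oplam(W)F=G$ and $\oplam(W)$ is relatively dense. This direction uses only Lemma~\ref{lem:smallfd} and not compactness of $W$.

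For the converse I would argue by contradiction, assuming $\oplam(W)$ relatively dense, say $\oplam(W)K=G$ with $K\subset G$ compact, but $\mathrm{int}(W)=\varnothing$. The crucial step is to transfer relative density in direct space into a covering statement over the whole group. Writing any $g=k\ell$ with $k\in K$ and $\ell\in\oplam(W)$, one checks $(g,e)=(k,(\ell^\star)^{-1})(\ell,\ell^\star)\in(K\times W^{-1})\mathcal L$, so that $G\times\{e\}\subseteq(K\times W^{-1})\mathcal L$; saturating by $\mathcal L$ gives $G\times L^\star\subseteq(K\times W^{-1})\mathcal L$, and since $L^\star$ is dense in $H$, the right-hand side is dense in $G\times H$. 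Because $\mathcal L$ is a countable lattice (Remark~\ref{rem:fdom}) and $K\times W^{-1}$ is compact, the family $\{(K\times W^{-1})\lambda\}_{\lambda\in\mathcal L}$ is locally finite, whence its union is closed; being both dense and closed, it must equal $G\times H$. Thus $G\times H=\bigcup_{\lambda\in\mathcal L}(K\times W^{-1})\lambda$ is a countable union of compact sets, each with empty interior, since $\mathrm{int}(K\times W^{-1})=\mathrm{int}(K)\times(\mathrm{int}(W))^{-1}=\varnothing$. As $G\times H$ is locally compact Hausdorff, hence a Baire space, this contradicts the Baire category theorem, and therefore $\mathrm{int}(W)\neq\varnothing$.

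I expect the main obstacle to be exactly this passage from relative density to the covering $G\times H=(K\times W^{-1})\mathcal L$: one must convert a combinatorial largeness condition in $G$ into a topological covering of the full product, and it is the density of $L^\star$ together with the local finiteness of lattice translates that upgrades the dense inclusion to an equality. I would also stress that the decisive input is topological rather than measure-theoretic: the windows of interest (squarefree numbers, visible lattice points) are nowhere dense yet of positive Haar measure, so any argument resting on $\theta_H(W)$ is bound to fail, whereas Baire category, applied to the empty interior of $K\times W^{-1}$, forces the conclusion. The only routine bookkeeping is the inversion $W\mapsto W^{-1}$, which is harmless since $H$ is abelian and inversion is a homeomorphism.
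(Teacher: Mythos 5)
Your proof is correct, but your ``only if'' direction takes a genuinely different route from the paper. The paper does not argue directly: it notes that a compact window with empty interior is closed, hence nowhere dense, and then invokes Proposition~\ref{prop:hrep}, whose proof applies Baire's category theorem \emph{in the internal space alone} -- since $\mathcal L$ is countable, $L^\star W$ is meagre in $H$, so some $c\in H$ satisfies $cW\cap L^\star=\varnothing$, and a shrinking-neighbourhood compactness argument then shows that holes of every compact size recur along a Delone set (hole-repetitivity), which in particular rules out relative density. You instead run Baire in the product $G\times H$: from $\oplam(W)K=G$ you get $G\times\{e\}\subseteq (K\times W^{-1})\mathcal L$, saturate by $\mathcal L$ to $G\times L^\star$, and use density of $L^\star$ together with local finiteness of the translates $(K\times W^{-1})\lambda$ to conclude that the closed set $(K\times W^{-1})\mathcal L$ equals all of $G\times H$, a countable union of closed sets with empty interior, contradicting Baire. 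All steps check out; you are right that the upgrade from dense to closed is indispensable, since a meagre set can perfectly well be dense (as $\mathbb Q$ in $\mathbb R$). One small point of attribution: local finiteness comes from \emph{discreteness} of $\mathcal L$ (a discrete subgroup of a Hausdorff group is closed, so $\mathcal L\cap (K\times W^{-1})^{-1}V$ is finite for compact $V$), not from its countability, which enters only in the Baire step itself. Comparing the two: your argument is more self-contained and aims straight at non-relative-density, whereas the paper's detour through Proposition~\ref{prop:hrep} buys strictly more information -- holes of arbitrary size repeat along Delone sets -- which the paper exploits again in Section~\ref{sec:sl} for the $k$-free lattice points. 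Your ``if'' direction coincides with the paper's standard argument via Lemma~\ref{lem:smallfd}, and your observation that it needs no compactness of $W$ matches the paper's own remark.
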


\begin{remark}
The $``\Leftarrow"$ statement is standard. It is a consequence of Lemma~\ref{lem:smallfd} and even holds without the assumption of relative compactness of $W$, see e.g.~\cite[Prop.~3]{me72}, \cite[Prop.~2]{schr73}, \cite[Prop.~2.6]{m97}, \cite[Lem.~2.3 (i)]{st14}, \cite[Prop.~7.5]{bg13}. The $``\Rightarrow"$ statement is a consequence of Proposition~\ref{prop:hrep}. Note that for any (weak) model set $\Lambda$ one may choose the countable set $W=\Lambda^\star$ as a window. Thus $W$ might not have any interior point.
\end{remark}

According to the previous proposition, a compact window $W$ with empty interior implies that $\oplam(W)$ has holes of arbitrary size. In fact, these holes repeat throughout $\oplam(W)$. We say that a uniformly discrete set $\Lambda \subset G$ is \textit{hole-repetitive} if for every compact set $K\subset G$ there is a Delone set $P_K\subset G$ such that $\Lambda \cap t^{-1}K = \varnothing$ for all $t\in P_K$. 
The following statement is proven by an argument from \cite{bm02}. Recall that $W$ is nowhere dense iff $\mathrm{int}(\mathrm{cl}(W))=\varnothing$. 
\begin{proposition}\label{prop:hrep}
Let $(G, H,\mathcal L)$ be a cut-and-project scheme. If $W\subset H$ is relatively compact and nowhere dense, then $\oplam(W)$ is hole-repetitive.
\end{proposition}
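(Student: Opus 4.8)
The plan is to transport the problem to the compact quotient torus $\mathbb{T}:=(G\times H)/\mathcal{L}$ and to read off holes from a dense orbit meeting a large open set. Write $q\colon G\times H\to\mathbb{T}$ for the (open, continuous) quotient map; since $\mathcal{L}$ is discrete, $q$ is a local homeomorphism. The first step is the elementary reformulation that for $t\in G$,
\begin{displaymath}
\oplam(W)\cap t^{-1}K=\varnothing\iff (t,e)\mathcal{L}\cap(K\times W)=\varnothing\iff \Phi(t)\notin q(K\times W),
\end{displaymath}
where $\Phi\colon G\to\mathbb{T}$, $\Phi(t):=q(t,e)$. Indeed, $\ell\in\oplam(W)\cap t^{-1}K$ means $\ell^\star\in W$ and $t\ell\in K$, i.e. $(t,e)(\ell,\ell^\star)\in K\times W$; and a point of $K\times W$ lies in the coset $(t,e)\mathcal{L}$ exactly when its $q$-image equals $\Phi(t)$. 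Thus finding holes amounts to finding $t$ whose orbit point $\Phi(t)$ avoids the ``obstruction set'' $q(K\times W)$.

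The key geometric input is that this obstruction set is small. Replacing $K,W$ by their closures only enlarges it, so it suffices to treat $\mathcal{S}:=q(\mathrm{cl}(K)\times\mathrm{cl}(W))$, which is compact, hence closed in $\mathbb{T}$. I claim $\mathcal{S}$ is \emph{nowhere dense}. This is the step I expect to be the main obstacle, and I would handle it by Baire category. The set $C:=\mathrm{cl}(K)\times\mathrm{cl}(W)$ is nowhere dense in $G\times H$ because $\mathrm{int}(\mathrm{cl}(K)\times\mathrm{cl}(W))=\mathrm{int}(\mathrm{cl}(K))\times\mathrm{int}(\mathrm{cl}(W))$ and the second factor is empty, as $W$ is nowhere dense. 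If $\mathcal{S}$ had nonempty interior $O$, then $q^{-1}(O)$ would be a nonempty open subset of $q^{-1}(q(C))=\mathcal{L}\cdot C=\bigcup_{\lambda\in\mathcal{L}}\lambda C$. But $\mathcal{L}$ is countable (Remark~\ref{rem:fdom}~(ii)) and each $\lambda C$ is nowhere dense, so $\mathcal{L}\cdot C$ is meager; a nonempty open subset of a meager set contradicts the Baire property of the locally compact Hausdorff space $G\times H$. Hence $\mathcal{S}$ is nowhere dense, and $V:=\mathbb{T}\setminus\mathcal{S}$ is open and dense.

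Next I would show that the return set $\Phi^{-1}(V)$ is relatively dense in $G$. Here $\Phi$ is a continuous homomorphism whose image is dense in $\mathbb{T}$: the closure of $(G\times\{e\})\mathcal{L}$ contains $G\times\overline{L^\star}=G\times H$ since $L^\star$ is dense. Fix a nonempty open $V_0\subset V$ and a symmetric unit neighbourhood $N\subset\mathbb{T}$ with $Nv_0\subset V_0$ for some $v_0\in V_0$. By density of $\Phi(G)$ the translates $\{\Phi(g)N\}_{g\in G}$ cover $\mathbb{T}$, and by compactness finitely many suffice, say $\mathbb{T}=\bigcup_{i=1}^n\Phi(g_i)N$. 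A short computation then gives $G=\bigcup_{i=1}^n\Phi^{-1}(V_0)\,g_i^{-1}$, so $\Phi^{-1}(V_0)$ is open and relatively dense with the finite (hence compact) set $\{g_i^{-1}\}$.

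Finally I would extract a Delone set from this open relatively dense set: choose $P_K\subset\Phi^{-1}(V_0)$ maximal with respect to being $U$-uniformly discrete for a fixed unit neighbourhood $U\subset G$. Maximality makes every point of $\Phi^{-1}(V_0)$ lie within $U$ of $P_K$, so $P_K$ inherits relative density from $\Phi^{-1}(V_0)$ and is therefore Delone. For each $t\in P_K$ we have $\Phi(t)\in V_0\subset\mathbb{T}\setminus\mathcal{S}$, so $\Phi(t)\notin q(\mathrm{cl}(K)\times\mathrm{cl}(W))\supset q(K\times W)$, and the reformulation in the first paragraph yields $\oplam(W)\cap t^{-1}K=\varnothing$. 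As this produces a Delone set $P_K$ of holes for every compact $K$, the set $\oplam(W)$ is hole-repetitive.
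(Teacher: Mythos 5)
Your proof is correct, but it is organised quite differently from the paper's, so a comparison is worthwhile. The paper never leaves the internal space: since $L^\star$ is countable (Remark~\ref{rem:fdom}\,(ii)) and $W$ is nowhere dense, $L^\star W$ is meagre, so Baire's theorem in $H$ yields $c\in H$ with $cW\cap L^\star=\varnothing$; relative compactness of $W$ then makes $(K\times VcW)\cap\mathcal L$ finite for a compact unit neighbourhood $V\subset H$, so the single point $c$ can be thickened to a unit neighbourhood $U\subset V$ with $(K\times UcW)\cap\mathcal L=\varnothing$, and the Delone set of hole positions is simply $P_K=\oplam(Uc)$ --- Delone by the standard model set facts, since $Uc$ is relatively compact with non-empty interior. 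You use the same Baire/countability core (your nowhere-density of $\mathcal S=q(\mathrm{cl}(K)\times\mathrm{cl}(W))$ is exactly the paper's meagreness of $L^\star W$, transported to the quotient), but you then run a minimality-style argument in $\mathbb T=(G\times H)/\mathcal L$: the orbit $\Phi(G)$ is dense in the compact group $\mathbb T$, hence returns syndetically to the open set $V_0$ --- your finite-subcover computation parallels the proof of Lemma~\ref{lem:smallfd} --- and a maximal uniformly discrete subset furnishes the Delone set. What each approach buys: the paper gets the Delone set for free as a weak model set, with hole positions at points of $L$ itself, no quotient machinery and no Zorn, and its proof avoids commutativity so that the non-abelian extension of Remark~\ref{rem:gen}\,(ii) applies verbatim; your route makes the thickening step automatic (the complement of the compact set $\mathcal S$ is open, so no separate finiteness argument is needed) and cleanly isolates the dynamical content, namely syndetic return times of a dense orbit to an open set.

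Two small repairs to your write-up. First, in the extraction step, maximality only gives that every point of $\Phi^{-1}(V_0)$ lies in $P_K\,U^{-1}U$, not ``within $U$ of $P_K$''; choose $U$ compact so that the relative-density witness $\left(U^{-1}U\cup\{e\}\right)\{g_1^{-1},\dots,g_n^{-1}\}$ is compact, and note that Zorn's lemma is what produces the maximal $U$-uniformly discrete subset (unions of chains of such sets are again $U$-uniformly discrete). Second, you require $Nv_0\subset V_0$ but your covering computation delivers $\Phi(tg_i)\in v_0N$, which silently uses commutativity of $\mathbb T$; this is harmless here since $G,H$ are abelian by Definition~\ref{def:cpscheme}, but it is the one point that would need left/right bookkeeping in the non-abelian setting of Remark~\ref{rem:gen}\,(ii).
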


\begin{proof}
As $L^\star\subset H$ is countable by Remark~\ref{rem:fdom} (ii) and
$W$ is nowhere dense, the set $L^\star W$ is meagre by definition. Hence, by Baire's category theorem \cite[Thm.~IX.5.1]{bou89}, $L^\star W$ has empty interior. In particular $L^\star W \ne H$, which implies that there
exists $c\in H$ such that $c W\cap L^\star=\varnothing$. Take
arbitrary compact $K\subset G$. Then we have $(K\times c W)\cap \mathcal L=\varnothing$. Take a compact unit neighbourhood
$V\subset H$. Then $K\times (V cW)$ is relatively compact since $W$ is relatively compact. Hence
$(K\times V cW)\cap \mathcal L$ is finite, and we find a unit neighbourhood $U\subset V$ such that $(K\times U cW)\cap \mathcal L=\varnothing$. Now take any $\ell\in L$ from the Delone set $P_K=\oplam(Uc)$. Then $(K\times  \ell W)\cap \mathcal L=\varnothing$, hence $\ell^{-1} K \cap \oplam(W)=\varnothing$. This shows hole-repetitivity.
\end{proof}

\section{The density formula}\label{sec:df}

The density formula expresses the density of a model set $\oplam(W)$ by the volume of its window $W$. For regular model sets it has first been obtained as a consequence of the Poisson summation formula by Meyer for $G\times H=\mathbb R\times\mathbb R^n$,  see \cite{m70}, \cite[Sec.~V.7.3]{me72}, and for $G\times H=\mathbb R^m\times \mathbb R^n$ in \cite[Prop.~5.1]{mm10} and \cite[Lem.~9]{lo13}. This argument can be adapted to general regular model sets. A geometric proof, which ultimately relies on Lemma~\ref{lem:smallfd}, has been given by Schlottmann \cite[Thm.~1]{sch98} for $G=\mathbb R^m$ and general $H$, compare also \cite{ho98}. Proofs via dynamical systems have been given for general $G$ and $H$ by Moody \cite[Thm.~1]{m02} and in \cite[Thm.~9.1]{lr07}. We will refer to \cite{m02} as it fits our needs best.

First we describe suitable averaging sequences.
Consider for $U,W\subset G$ the \textit{(generalised) van Hove boundary}
\begin{displaymath}
\partial^UW=(U\mathrm{cl}(W)\cap \mathrm{cl}(W^c))\cup (U\mathrm{cl}(W^c)\cap \mathrm{cl}(W)),
\end{displaymath}
which was introduced in \cite[Eqn.~(1.1)]{sch00}, see
also \cite[Sec.~2.2]{mr13} for a discussion. The (ordinary) van Hove boundary $U\partial W$ satisfies $U\partial W\subset \partial^UW$. As $\partial W=\partial^{\{e\}}W\subset \partial^U W$ for $U$ any unit neighbourhood, the van Hove boundary may be considered as a thickened topological boundary in that case. A \textit{(generalised) van Hove sequence} is a sequence  $(A_n)_{n\in\mathbb N}$  of compact sets  in $G$ of positive Haar measure $\theta(A_n)$, such that for all compact $K\subset G$ we have
\begin{equation}\label{eq:vh}
\lim_{n\to\infty} \frac{\theta(\partial^K A_n)}{\theta(A_n)}=0.
\end{equation}
Existence of van Hove sequences in $G$ is discussed in \cite{sch00}. In Euclidean space, any sequence of non-empty compact rectangular boxes of diverging inradius is a van Hove sequence. Also any sequence of compact non-empty balls of diverging radius is a van Hove sequence. We  list some properties of van Hove sequences.

\begin{remark}\label{rem:vh}
 Every van Hove sequence is a F{\o}lner sequence \cite{em68}. Consider any van Hove sequence $(A_n)_{n\in\mathbb N}$. Then $\theta(KA_n)=\theta(A_n)+o(\theta(A_n))$ as $n\to\infty$ for any non-empty compact $K$. We also have $\partial^U A_n\subset \partial^V A_n$ for arbitrary $U\subset V$. If $F$ is a compact set containing $e$, then $(FA_n)_{n\in\mathbb N}$ is a van Hove sequence. Indeed, to check the van Hove property one may restrict to compact $K$ containing $e$, and for such $K$ one calculates $\partial^K (FA_n)\subset \partial^{KF} A_n$.
\end{remark}

For a van Hove sequence $(A_n)_{n\in\mathbb N}$ in $G$ and $\xi=(t,h) \in G\times H$, we consider
the relative point frequencies
\begin{equation}\label{eq:freq}
f_n(W,\xi)=\frac{1}{\theta_G(A_n)} \left|t\oplam(h W)\cap A_n\right|,
\end{equation}
where $\left|\cdot\right|$ denotes cardinality, and we also write $f_n(W)$ instead of $f_n(W,e)$, where $e$ is the unit in $G\times H$.

\begin{lemma}[\cite{m02} Density formula for regular model sets]\label{lem:df}
Let $\oplam(W)$ be a regular model set with measurable window
$W$. Then, for every $\xi\in G\times H$ and for every van Hove
sequence, one has
\begin{equation}\label{eq:df}
\lim_{n\to\infty} f_n(W,\xi)=\mathrm{dens}(\mathcal L)\,\theta_H(W).
\end{equation}
\end{lemma}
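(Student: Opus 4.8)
The plan is to prove the density formula for a regular model set by reducing it to the equidistribution machinery from Moody \cite{m02}, which we are entitled to cite.

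First I would set up the right framework. The statement concerns $\lim_{n\to\infty} f_n(W,\xi)$ for arbitrary $\xi=(t,h)\in G\times H$ and an arbitrary van Hove sequence $(A_n)$. Since $t\oplam(hW)=\{t\ell \mid \ell\in L,\ \ell^\star\in hW\}$, counting points of this set inside $A_n$ is the same as counting $\ell\in L$ with $\ell^\star\in hW$ and $t\ell\in A_n$, i.e.\ $\ell\in t^{-1}A_n$. So $f_n(W,\xi)$ counts lattice points $(\ell,\ell^\star)\in\mathcal L$ lying in the ``box'' $(t^{-1}A_n)\times hW$. The heuristic is that such a box has measure $\theta_G(t^{-1}A_n)\,\theta_H(hW)=\theta_G(A_n)\,\theta_H(W)$ by translation invariance of Haar measure, and the number of lattice points in a large box is asymptotically $\mathrm{dens}(\mathcal L)$ times its volume; dividing by $\theta_G(A_n)$ yields the claimed limit $\mathrm{dens}(\mathcal L)\,\theta_H(W)$. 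The role of $\xi$ is thus seen to be harmless: the internal translate $h$ merely replaces $W$ by $hW$, which has the same Haar measure and the same boundary measure, so it is again a regular window; the direct translate $t$ merely reindexes the van Hove sequence by a fixed translation, and by Remark~\ref{rem:vh} the van Hove property is preserved under translation. I would therefore reduce at once to the case $\xi=e$ and to $f_n(W)$.

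The core is then the uniform distribution result. The content of \cite[Thm.~1]{m02} is precisely that, for a measurable window $W$ whose boundary has Haar measure zero, the point counts $|\oplam(W)\cap A_n|$ are asymptotically equidistributed along any van Hove (F\o{}lner) sequence, with the limiting frequency equal to $\mathrm{dens}(\mathcal L)\,\theta_H(W)$. The mechanism there is an application of the ergodic/equidistribution theorem on the compact quotient $(G\times H)/\mathcal L$: the indicator-type function $g_W$ built from $W$ is integrated against the averaging sequence, and continuity points are supplied by the almost-no-boundary hypothesis $\theta_H(\partial W)=0$, which guarantees that the relevant counting function is Riemann-integrable (its discontinuities form a null set) so that the Birkhoff-type averages converge to the integral. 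I would invoke this, checking only that our normalisation of $\theta_H$ via $1/\mathrm{dens}(\mathcal L)$ (Remark~\ref{rem:fdom}~(ii)) matches the normalisation in \cite{m02}, which is where the constant $\mathrm{dens}(\mathcal L)$ enters.

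The step I expect to be the main obstacle is verifying the regularity hypotheses survive the translation by $h$ and confirming that Moody's hypotheses are literally met by our setup. Concretely, one must check that $hW$ is again measurable with $\theta_H(\partial(hW))=\theta_H(h\,\partial W)=\theta_H(\partial W)=0$, using that left multiplication by $h$ is a homeomorphism and that $\theta_H$ is translation invariant; this is routine but must be stated. Less routine is matching conventions: Moody works with general second-countable LCA groups and a specific Haar normalisation, so I would record explicitly that second countability of $H$ may be assumed by Remark~\ref{rem:wms}, that $\sigma$-compactness of $G$ gives the F\o{}lner structure needed, and that the density $\mathrm{dens}(\mathcal L)$ as defined through relatively compact fundamental domains coincides with the normalising constant appearing in \cite{m02}. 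Once these bookkeeping points are settled, the limit \eqref{eq:df} follows directly, uniformly in the choice of van Hove sequence and of $\xi$.
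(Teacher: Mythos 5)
Your proposal coincides with the paper's own treatment: the paper does not reprove this lemma but cites Moody's uniform distribution theorem \cite[Thm.~1]{m02}, with Remark~\ref{rem:gen} recording exactly the bookkeeping you carry out --- independence of $\xi$ via translation invariance of $\theta_G$ and $\theta_H$, second countability of $H$, the Haar normalisation giving the constant $\mathrm{dens}(\mathcal L)$, and the ergodic-theorem mechanism on the compact quotient $(G\times H)/\mathcal L$ with Riemann integrability supplied by $\theta_H(\partial W)=0$. The one point you leave implicit is that Moody's theorem as literally stated imposes an additional Tempelman condition \cite[Eqn.~(5)]{m02} on the van Hove sequence and gives the formula only for almost all $\xi$; the almost-no-boundary mechanism you describe is precisely why, as remarked in the proof of \cite[Thm.~1]{m02} and in Remark~\ref{rem:gen}~(i), both restrictions can be dropped for regular model sets, so this is a citation detail rather than a gap.
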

\begin{remarks}\label{rem:gen}
\item
As remarked in the proof of \cite[Thm.~1]{m02}, for regular model sets the additional Tempelman condition \cite[Eqn.~(5)]{m02} on the van Hove sequence can be dropped. The above limit is independent of the choice of the van Hove sequence. The independence of the choice of $\xi\in G\times H$ is an immediate consequence of translation invariance of $\theta_H$ and $\theta_G$. For metrisable $G$ the convergence is even uniform in $\xi$. This is a consequence of the uniform ergodic theorem, see e.g.~\cite[Thm.~2.16]{mr13}, compare also \cite[Thm.~1]{sch98}.
\item  Assume that in the cut-and-project scheme of Definition~\ref{def:cpscheme}, the direct and internal space are both second countable  locally compact Hausdorff groups and that $\mathcal L$ is a normal discrete subgroup of $G\times H$ such that $(G\times H)/\mathcal L$ is compact. Assume that $G$ admits a van Hove sequence. Inspecting its proof reveals that \cite[Thm.~1]{m02} continues to hold in this case. Indeed, $\mathcal L$ admits relatively compact fundamental domains by \cite{fg68}, and the extended Weil formula, also called the quotient integral formula \cite[Thm~1.5.2]{de09}, remains valid since $G\times H$ is unimodular by \cite[Thm.~9.1.6]{de09}. Since $\mathcal L$ is a normal subgroup, the coset space $(G\times H)/\mathcal L$ carries a canonical group structure, and the induced $G$-action is minimal and hence uniquely ergodic. The uniform ergodic theorem \cite[Thm.~2.16]{mr13} also applies in this situation.
\end{remarks}

As a result of \cite[Thm.~1]{m02},  the density formula~\eqref{eq:df} continues to hold for weak model sets with measurable window for \textit{almost all} $\xi$ within a measurable fundamental domain of the lattice $\mathcal L$. We are interested in an extension to weak model sets which holds for all
$\xi\in G\times H$. A version for Euclidean direct space $G=\mathbb R$ goes back to Meyer \cite[Rem.~(6.2)]{me73}. For $G=\mathbb R^d$ the following proposition appears in Pleasants \cite{p06}, and it is remarked that it can be proved by adaption of Schlottmann's proof of the density formula in \cite{sch98}. 

\begin{proposition}[Density formula for weak model sets]\label{prop:df}
Let $\oplam(W)$ be a weak model set. Then, for every $\xi\in G\times H$ and for
every van Hove sequence, one has
\begin{displaymath}
\mathrm{dens}(\mathcal L)\,\theta_H(\mathrm{int}(W))\le 
\liminf_{n\to\infty} f_n(W,\xi)\le 
\limsup_{n\to\infty} f_n(W,\xi)\le \mathrm{dens}(\mathcal L)\,\theta_H(\mathrm{cl}(W)).
\end{displaymath}
\end{proposition}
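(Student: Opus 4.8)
The plan is to sandwich the weak model set $\oplam(W)$ between two regular model sets and invoke the density formula for regular model sets (Lemma~\ref{lem:df}) for each bound separately. For the upper bound, the idea is to approximate the compact window $W$ from outside by a measurably regular window. Concretely, since $\mathrm{cl}(W)$ is compact, for any open unit neighbourhood $V\subset H$ the set $V\,\mathrm{cl}(W)$ is a relatively compact neighbourhood of $\mathrm{cl}(W)$. One can choose a measurable (indeed topologically regular) window $W'$ with $\mathrm{cl}(W)\subset \mathrm{int}(W')$ and $\theta_H(W')$ arbitrarily close to $\theta_H(\mathrm{cl}(W))$; the existence of such approximating regular windows from outside is exactly the content alluded to in Remark~\ref{rem:anb}. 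Then $\oplam(W)\subset\oplam(W')$, so $f_n(W,\xi)\le f_n(W',\xi)$ for every $n$ and every $\xi$. Passing to the limit superior and applying the density formula to the regular model set $\oplam(W')$ gives $\limsup_n f_n(W,\xi)\le \mathrm{dens}(\mathcal L)\,\theta_H(W')$, and letting $W'$ shrink down to $\mathrm{cl}(W)$ yields the upper bound.

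For the lower bound, I would argue symmetrically from inside. The inclusion $\mathrm{int}(W)\subset W$ gives $\oplam(\mathrm{int}(W))\subset\oplam(W)$, hence $f_n(\mathrm{int}(W),\xi)\le f_n(W,\xi)$ for all $n$. Now I approximate the open set $\mathrm{int}(W)$ from inside by measurably regular windows $W''$ with $\mathrm{cl}(W'')\subset\mathrm{int}(W)$ and $\theta_H(W'')$ close to $\theta_H(\mathrm{int}(W))$. Applying the density formula to the regular model set $\oplam(W'')$ and taking the limit inferior gives $\liminf_n f_n(W,\xi)\ge \liminf_n f_n(W'',\xi)=\mathrm{dens}(\mathcal L)\,\theta_H(W'')$, and exhausting $\mathrm{int}(W)$ by such $W''$ from below produces the lower bound $\mathrm{dens}(\mathcal L)\,\theta_H(\mathrm{int}(W))$.

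The main obstacle is the construction of the approximating regular windows, i.e.\ verifying that one can find measurable windows $W'',W'$ with almost no boundary squeezed respectively inside $\mathrm{int}(W)$ and outside $\mathrm{cl}(W)$, with Haar measures converging to $\theta_H(\mathrm{int}(W))$ and $\theta_H(\mathrm{cl}(W))$. In Euclidean internal space this is routine via inner and outer regularity of $\theta_H$ together with a mollification/smoothing of indicator functions, but in a general (second countable) LCA internal space one must instead rely on the regularity of the Haar measure $\theta_H$ and on the existence of a neighbourhood basis of well-behaved sets. Inner regularity supplies compact $C\subset\mathrm{int}(W)$ with $\theta_H(C)$ close to $\theta_H(\mathrm{int}(W))$, and outer regularity supplies open $O\supset\mathrm{cl}(W)$ with $\theta_H(O)$ close to $\theta_H(\mathrm{cl}(W))$; one then interposes a window $W''$ or $W'$ with $\theta_H(\partial W)=0$ using the fact that, for relatively compact sets, only countably many of the boundary-sphere thickenings $V\,C$ can carry positive boundary measure, so a suitable $V$ can be selected.

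Finally I would note that the independence of the bounds from the choice of $\xi$ and of the van Hove sequence is inherited from the corresponding independence in Lemma~\ref{lem:df}, and that the argument uses only the monotonicity of $\oplam(\cdot)$ under inclusion of windows, so no measurability of $W$ itself is required, in keeping with Remark~\ref{rem:wms}.
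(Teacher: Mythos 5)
Your proposal is correct and takes essentially the same route as the paper's own proof: sandwich $\oplam(W)$ between regular model sets built from inner and outer regularity of $\theta_H$ together with null-boundary windows squeezed in via Remark~\ref{rem:anb} (Lemma~\ref{lem:vanbd}), then apply the monotonicity $f_n(W'',\xi)\le f_n(W,\xi)\le f_n(W',\xi)$ and Lemma~\ref{lem:df}. Even your countability argument for selecting a thickening with measure-zero boundary is exactly the device used in the paper's proof of Lemma~\ref{lem:vanbd}.
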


\begin{remarks}
\item For regular model sets we get back to the density formula of
  regular model sets (Lemma~\ref{lem:df}). In general neither bound need be attained, since the window may always be chosen countable, or it may be artificially enlarged in the complement of $L^\star$. 
See Section~\ref{sec:sl} for examples of non-regular model sets where the upper bound is sharp.
\item As the following proof does not use commutativity of $G$ or $H$, the conclusion of the above proposition also holds for the non-abelian cut-and-project schemes in Remark~\ref{rem:gen} (ii).
\end{remarks}

Our proof is by approximation with regular model sets.  For preparation, recall that the window of a regular model set has almost no boundary. The following lemma appears, under slightly different assumptions, as a special case in \cite[Lem.~2]{sch98}. Our proof is adapted from \cite[Lem.~6.1]{r07}.

\begin{lemma}\label{lem:vanbd}
Let $G$ be a metrisable locally compact group and fix a left Haar measure for $G$. Consider any neighbourhood $U$ of any compact $K$ in $G$. Then there exists a compact unit neighbourhood $V$ with almost no boundary such that $K\subset KV\subset U$. Moreover, there exists a neighbourhood $W$ of $K$ with almost no boundary such that $K\subset W\subset U$.
\end{lemma}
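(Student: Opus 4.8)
The plan is to realise both $V$ and $W$ as members of one-parameter families of sublevel sets of continuous functions, and to exploit the elementary fact that the topological boundary of such a sublevel set is always contained in the corresponding level set; since distinct level sets are disjoint and, after localisation, lie in a set of finite Haar measure, only countably many of them can carry positive measure, so a suitable parameter can be chosen so that the boundary has measure zero. Concretely, since $G$ is metrisable I would first fix a compatible metric $d$ on $G$ (no invariance is needed). Replacing $U$ by its interior, I may assume $U$ is open. Using local compactness I choose a relatively compact open set $U_0$ and a compact unit neighbourhood $C$ with $K\subset U_0\subset \mathrm{cl}(U_0)\subset U$; since a left Haar measure is finite on compact sets, both $\theta(U_0)$ and $\theta(C)$ are finite. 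The principle I will invoke twice is that a pairwise disjoint family $(S_r)$ of measurable subsets of a fixed set of finite measure can contain only countably many members of positive measure, so on any uncountable parameter interval almost every parameter yields $\theta(S_r)=0$.

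For the compact unit neighbourhood $V$ I would consider the closed balls $V_s=\{x\in G\,|\,d(x,e)\le s\}$. Choosing $s$ below a threshold $\varepsilon$ with $\{x\,|\,d(x,e)\le\varepsilon\}\subset C$ (which exists because $C$ is a neighbourhood of $e$) makes each $V_s$ a compact unit neighbourhood, and its boundary satisfies $\partial V_s\subset\{x\,|\,d(x,e)=s\}\subset C$, since $\{x\,|\,d(x,e)<s\}$ is an open subset of $V_s$. By the standard fact that for the compact $K$ inside the open $U$ there is a unit neighbourhood $N$ with $KN\subset U$, I shrink the threshold further so that $V_s\subset N$, giving $KV_s\subset U$ for all admissible $s$; note $K\subset KV_s$ holds automatically since $e\in V_s$. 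The level sets $\{x\,|\,d(x,e)=s\}$ are pairwise disjoint and contained in the finite-measure set $C$, so I may pick an admissible $s$ with $\theta(\{x\,|\,d(x,e)=s\})=0$, and then $V:=V_s$ satisfies $\theta(\partial V)=0$, as required.

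For the neighbourhood $W$ of $K$ I would instead use the open sublevel sets $W_r=\{x\in G\,|\,d(x,K)<r\}$ of the continuous function $x\mapsto d(x,K)$. Each $W_r$ is an open neighbourhood of $K$, its boundary satisfies $\partial W_r\subset\{x\,|\,d(x,K)=r\}$, and for $r$ below $\delta_0:=d(K,U_0^c)$, which is strictly positive as the distance between the compact $K$ and the disjoint closed set $U_0^c$, both $W_r$ and its level set lie inside $U_0$, so that $K\subset W_r\subset U_0\subset U$. Again the level sets are pairwise disjoint subsets of the finite-measure set $U_0$, so I may choose $r\in(0,\delta_0)$ with $\theta(\{x\,|\,d(x,K)=r\})=0$ and set $W:=W_r$.

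The argument is essentially routine once this viewpoint is adopted, and what requires care is the handful of genuinely used inputs rather than any delicate estimate. I expect the points to verify to be that metrisability supplies a compatible metric whose small closed balls are compact by local compactness, that the boundary of a sublevel set of a continuous function is contained in the corresponding level set, and that the distance between the compact $K$ and the closed complement $U_0^c$ is strictly positive; the co-countability of positive-measure level sets then does the remaining work. In particular there is no tension between the smallness needed for the inclusions $KV_s\subset U$ and $W_r\subset U$ and the freedom needed to avoid the positive-measure levels, since the latter form only a countable subset of an interval.
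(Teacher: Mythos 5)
Your proof is correct. For the first claim (the compact unit neighbourhood $V$) you follow essentially the paper's route: both arguments take closed metric balls $V_s$ about the unit, observe $\partial V_s\subset\{x\,|\,d(x,e)=s\}$, so that boundaries for distinct radii are pairwise disjoint inside a compact set of finite Haar measure, and conclude that some admissible radius has a null boundary; the paper merely packages the countability step as a contradiction (if $\theta(\partial V_\varepsilon)>0$ for all $\varepsilon\in(0,\varepsilon')$, some $I_n=\{\varepsilon\,|\,\theta(\partial V_\varepsilon)>1/n\}$ is infinite, yielding a divergent sum of measures inside $V_{\varepsilon'}$), which is the same principle you invoke directly. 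For the second claim you genuinely diverge. The paper covers $K$ by finitely many left translates $x_iV'$ with $V'\subset V$ open and sets $W=\bigcup_i x_iV$, using $\partial\bigl(\bigcup_i x_iV\bigr)\subset\bigcup_i x_i\partial V$ together with left invariance of the Haar measure to keep the boundary null; you instead rerun the disjoint-level-set argument on the sublevel sets $W_r=\{x\,|\,d(x,K)<r\}$, with $\delta_0=d(K,U_0^c)>0$ guaranteeing $K\subset W_r\subset U_0\subset U$. Your variant buys independence from invariance: it proves the second claim for any Borel measure finite on compact sets, and it produces an open $W$ directly. The paper's covering trick buys something else: it needs only a single null-boundary unit neighbourhood as input and no metric on $G$, which is precisely the step that survives in non-metrisable LCA groups (compare Remark~\ref{rem:anb}), and it yields a compact $W$. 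Two cosmetic points to tidy: your distance-function argument tacitly assumes $K\ne\varnothing$ (and, for $\delta_0$ to be finite, $U_0\ne G$); both degenerate cases are trivial, as the paper also notes for $K=\varnothing$.
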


\begin{proof}
Existence of a unit neighbourhood $V'$ such that $K\subset KV'\subset U$ is standard \cite[Lem.~4.1.3]{de09}.
Fix a left Haar measure $\theta$ and a metric on $G$. Denote by $V_\varepsilon$ the closed ball of radius $\varepsilon$ about the unit element. By choosing $\varepsilon'>0$ sufficiently small, we may assume $V'=V_{\varepsilon'}$ and $V'$ compact. We show that there exists $\varepsilon\in (0,\varepsilon')$ such that $\theta(\partial V_{\varepsilon})=0$, which proves the first claim with the compact unit neighbourhood $V=V_\varepsilon$. Indeed, assume $\theta(\partial V_{\varepsilon})>0$ for all $\varepsilon\in (0,\varepsilon')$. Then there exists $n\in\mathbb N$ such that $I_n=\{\varepsilon\in(0,\varepsilon')\,|\, \theta(\partial V_{\varepsilon})>1/n\}$ is infinite, since otherwise $(0,\varepsilon')=\bigcup_{n\in\mathbb N} I_n$ would be countable. Consider the above $n$ and choose any countably infinite subset $J_n\subset I_n$. We arrive at the contradiction
\begin{displaymath}
\infty > \theta(V_{\varepsilon'})\ge \theta\left(\bigcup_{\varepsilon\in J_n} \partial V_{\varepsilon}\right)=
\sum_{\varepsilon\in J_n}\theta(\partial V_\varepsilon)=\infty.
\end{displaymath}
For the second claim, assume
$K\ne\varnothing$ and take a non-empty open $V'\subset V$. Then the
open cover $(xV')_{x\in K}$ of $K$ contains a finite subcover
$(x_iV')_i$. Then $W=\bigcup_i x_i V\subset U$ is a neighbourhood of
$K$ with almost no boundary since $\partial(\bigcup_i x_i V)\subset
\bigcup_i x_i\partial V$. The case $K=\varnothing$ follows easily.
\end{proof}

\begin{remark}\label{rem:anb}
Lemma~\ref{lem:vanbd} also holds if $G$ is a general LCA Hausdorff group. This can be inferred from the proof of \cite[Lem.~2]{sch98}.
\end{remark}

\begin{proof}[Proof of Proposition~\ref{prop:df}]
By left invariance of the Haar measure on $G\times H$, we assume without loss of generality that $\xi$ is the unit element. Take an arbitrary van Hove sequence.
For the left inequality, assume w.l.o.g.~that
$\mathrm{int}(W)\ne\varnothing$ and fix an arbitrary $\varepsilon>0$. Since $\theta_H$ is inner regular for open sets \cite[Thm.~1.3.4]{de09}, we find compact $K\subset \mathrm{int}(W)$ such that $\theta_H(\mathrm{int}(W))\le \theta_H(K)+\varepsilon/\mathrm{dens}(\mathcal L)$. Choose $U$ with almost no boundary such that $K\subset U \subset \mathrm{int}(W)$ as in Remark~\ref{rem:anb}. Then clearly $f_n(W)\ge f_n(U)$. As $\oplam(U)$ is a regular model set, we have by Lemma~\ref{lem:df}
\begin{displaymath}
\begin{split}
\liminf_{n\to\infty} f_n(W)&\ge \lim_{n\to\infty} f_n(U)=\mathrm{dens}(\mathcal L)\,\theta_H(U)\\
&\ge \mathrm{dens}(\mathcal L)\,\theta_H(K)\ge \mathrm{dens}(\mathcal L)\,\theta_H(\mathrm{int}(W))-\varepsilon.
\end{split}
\end{displaymath}
As $\varepsilon>0$ was arbitrary, we get the claimed lower bound.

For the right inequality, fix arbitrary $\varepsilon>0$. Since
$\theta_H$ is outer regular \cite[Thm.~1.3.4]{de09}, we find a non-empty open $V\supset \mathrm{cl}(W)$ such that $\theta_H(V)\le \theta_H(\mathrm{cl}(W))+\varepsilon/\mathrm{dens}(\mathcal L)$. Choose $U$ with almost no boundary such that $\mathrm{cl}(W)\subset U \subset V$ as in Remark~\ref{rem:anb} above. Then clearly $f_n(W)\le f_n(U)$. As $\oplam(U)$ is a regular model set, we have by Lemma~\ref{lem:df}
\begin{displaymath}
\begin{split}
\limsup_{n\to\infty} f_n(W)&\le \lim_{n\to\infty} f_n(U)=\mathrm{dens}(\mathcal L)\,\theta_H(U)\\
&\le \mathrm{dens}(\mathcal L)\,\theta_H(V)\le \mathrm{dens}(\mathcal L)\,\theta_H(\mathrm{cl}(W))+\varepsilon.
\end{split}
\end{displaymath}
As $\varepsilon>0$ was arbitrary, we get the claimed upper bound.
\end{proof}

\section{Pattern entropy of weak model sets}\label{sec:entms}

\subsection{Complexity measures and pattern entropy}\label{sec:pce}

Let $D$ be a uniformly discrete subset of $G$ and consider a compact
$A\subset G$. Then, for every translation $t\in G$, the finite (possibly empty)
set $D\cap tA$ is called an \textit{$A$-pattern} of $D$.
In order to count $A$-patterns of $D$ up to translation, we recall the complexity measure introduced by Lagarias and Pleasants \cite[Eqn.~(2.8)]{l99}, \cite[Def.~1.4]{lp03}. This has been studied in detail within the class of  regular model sets with convex polyhedral windows  \cite{j10}. 

For counting $A$-patterns, Lagarias and Pleasants consider
\textit{centred} $A$-patterns only, i.e., they assume $e\in A$ and that $A$-patterns are of the form
$D\cap xA$ for $x\in D$ instead of arbitrary $x\in G$. In that case we call $x\in D$ the \textit{center} of the pattern. This gives rise to a complexity measure $N^*_A(D)$ via
\begin{displaymath}
N^*_A(D)=\left|\{x^{-1}D\cap A\,|\, x\in D\}\right|.
\end{displaymath}
Here $\left|\cdot\right|$ denotes cardinality, and the asterisk reminds of centering.
The uniformly discrete set $D$ has \textit{finite local complexity (FLC)} if $N^*_A(D)$ is finite for every compact $A$. In Euclidean space, FLC is equivalent to  $D$ being of finite type \cite[Def.~1.2~(ii)]{l99}. Any discrete FLC set is uniformly discrete. Any weak model set is a uniformly discrete FLC set, since FLC is inherited from the underlying lattice by relative compactness of the window. 

\begin{definition}[Pattern entropy]\label{def:ent}
Let $D\subset G$ be any discrete FLC set, and let $\mathcal A=(A_n)_{n\in\mathbb N}$ be any van Hove sequence in $G$. The \textit{pattern entropy} $h^*_{\mathcal A}(D)\in [0,\infty]$ of $D$ relative to $\mathcal A$ is defined as 
\begin{displaymath}
h^*_{\mathcal A}(D)=\limsup_{n\to\infty}\frac{1}{\theta(A_n)}\log N^*_{A_n}(D),
\end{displaymath}
where $\theta$ is the fixed Haar measure on $G$. We use the convention $\log 0=0$.
\end{definition}

\begin{remarks}\label{rem:cp}
\item
Assume that $G$ is compact. Then $(A_n)_{n\in\mathbb N}$ is a van Hove sequence in $G$ iff $A_n=G$ for eventually all $n$. This follows from $\partial^G A=\varnothing$ if $A\in\{\varnothing, G\}$ and $\partial^G A=G$ otherwise.
Hence in that case $h_{\mathcal A}^*(D)=\theta(G)^{-1}\log N_G^*(D)$ is finite.
\item  Assume that $D$ is an FLC Delone set in Euclidean space and consider a van Hove sequence of balls centred in the origin. Then $D$ has finite pattern entropy \cite[Thm.~2.3]{l99}, which is claimed to be a limit \cite[Eqn.~(1.5)]{lp03}.
\item Weak model sets have finite pattern entropy for every van Hove sequence, see Theorem~\ref{thm:main} for a finite upper bound.
\item The above complexity measure has been introduced to describe discrete FLC sets in Euclidean space which are relatively dense. If $D$ has arbitrarily large holes, then non-centred $A$-patterns may be relevant for complexity, but such patterns are ignored in $N_A^*(D)$. 
\end{remarks}

We discuss another complexity measure which is tailored to general weak model sets. Let $D_0$ be any FLC Delone set of zero pattern entropy and consider $D\subset D_0$. One may identify $D$ with the characteristic function $1_D:D_0\to\{0,1\}$ of $D$, i.e., with the canonical $01$-colouring of $D_0$ induced by $D$.  When counting coloured $A$-patterns in the spirit of Lagarias and Pleasants, this leads to a complexity measure $N^*_A(D, D_0)$ different from $N_A^*(D)$, which is given by
\begin{displaymath}
N^*_A(D, D_0)=\left|\{1_{x^{-1}D\cap A}\,|\, x\in D_0\}\right|,
\end{displaymath}
with characteristic functions $1_{x^{-1}D\cap A}: x^{-1}D_0\cap A\to\{0,1\}$. As in Definition~\ref{def:ent}, we define a corresponding \textit{coloured pattern entropy} $h^*_{\mathcal A}(D,D_0)$ by
\begin{displaymath}
h_{\mathcal A}^*(D,D_0)=\limsup_{n\to\infty}\frac{1}{\theta(A_n)}\log N^*_{A_n}(D, D_0).
\end{displaymath}
The estimate $h_{\mathcal A}^*(D)\le h_{\mathcal A}^*(D,D_0)$ is obvious.
Of particular interest is the case where $D=\oplam(W)$ is a weak model set and where $D_0=\oplam(W_0)$ is a regular model set satisfying $W\subset W_0$. Such $W_0$ always exists due to Remark~\ref{rem:anb}, and $\oplam(W_0)$ has indeed zero pattern entropy by Theorem~\ref{thm:main} below.

\begin{remark}\label{rem:topent}
There are topological dynamical systems naturally associated to $D$ and 01-colourings of $D_0$ induced by $D$, whose topological entropy we denote by $h_t(D)$ and $h_t(D,D_0)$. These have been studied in Euclidean space for a van Hove sequence $\mathcal A$ of balls centred in the origin. If $D$ is an FLC Delone set, then $h_t(D)=h_{\mathcal A}^*(D)$ \cite[Thm.~1]{blr07}. In particular $h_t(D_0)=h_{\mathcal A}^*(D_0)=0$.  Since $D_0$ has zero topological entropy, we have for arbitrary $D\subset D_0$ the result $h_t(D)=h_t(D,D_0)=h_{\mathcal A}^*(D,D_0)$, where the last equality is \cite[Remark~2]{blr07}. This gives a combinatorial method to compute the topological entropy of a weak model set. For a general discrete FLC set $D$, an FLC Delone superset may not exist. But the topological entropy associated to $D$ equals the pattern entropy with respect to the complexity measure $N_A(D)$ where one counts the number of $A$-patterns of $D$ modulo translation by an element of $G$.
\end{remark}

\subsection{The pattern entropy bound}

As a preparation for the following theorem, we study van Hove boundaries.

\begin{lemma}\label{lem:du}
Let $W$ be any relatively compact set in some topological group $G$.   Then $\partial^U W$ is compact for every compact $U\subset G$. If  $G$ is locally compact Hausdorff and $\theta$ is a left Haar measure, then for every $\varepsilon>0$ there exists a compact
unit neighbourhood $U$ such that $\theta(\partial W)\le \theta(\partial^UW)\le\theta(\partial W)+\varepsilon$.
\end{lemma}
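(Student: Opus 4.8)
The plan is to split the statement into its two assertions, using the notation $\partial^U W = P_U \cup Q_U$ with $P_U = U\mathrm{cl}(W)\cap\mathrm{cl}(W^c)$ and $Q_U = U\mathrm{cl}(W^c)\cap\mathrm{cl}(W)$. For compactness I would first observe that $U\mathrm{cl}(W)$ is compact, being the image of the compact set $U\times\mathrm{cl}(W)$ under the continuous multiplication map; hence $P_U$, the intersection of this compact set with the closed set $\mathrm{cl}(W^c)$, is compact. For $Q_U$ the roles of the factors are reversed, and the point is that $U\mathrm{cl}(W^c)$ is closed: in any topological group the product of a compact and a closed set is closed, by a finite-subcover argument separating a point from $U\mathrm{cl}(W^c)$ (which needs no separation axiom). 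Then $Q_U$ is a closed subset of the compact set $\mathrm{cl}(W)$, hence compact, and $\partial^U W$ is compact as a union of two compact sets.

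Turning to the measure estimate, the lower bound is immediate: since $e\in U$ we have $\{e\}\subset U$, so $\partial W = \partial^{\{e\}}W\subset\partial^U W$ by the monotonicity recorded in Remark~\ref{rem:vh}, whence $\theta(\partial W)\le\theta(\partial^U W)$ for every compact unit neighbourhood $U$. The substance lies in the upper bound, and the guiding idea is that as $U$ shrinks to $\{e\}$ the compact sets $\partial^U W$ shrink to $\partial W$. Concretely I would first establish the containment $\bigcap_U \partial^U W \subset \partial W$, the intersection running over all compact unit neighbourhoods $U$ (which form a basis at $e$ by local compactness). This rests on the standard fact that $\bigcap_U U C = C$ for any closed $C$, applied to $C=\mathrm{cl}(W)$ and $C=\mathrm{cl}(W^c)$, which gives $\bigcap_U P_U = \mathrm{cl}(W)\cap\mathrm{cl}(W^c)=\partial W$ and likewise $\bigcap_U Q_U = \partial W$. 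To pass from the pieces to the union I would argue pointwise: if $x\in\bigcap_U\partial^U W$ but $x\notin\mathrm{cl}(W)$, then $x\notin Q_U\subset\mathrm{cl}(W)$ for every $U$, forcing $x\in P_U$ for every $U$ and hence $x\in\bigcap_U P_U\subset\mathrm{cl}(W)$, a contradiction; the case $x\notin\mathrm{cl}(W^c)$ is symmetric, so $x\in\partial W$.

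With this in hand I would invoke outer regularity of the Haar measure \cite[Thm.~1.3.4]{de09} to choose, given $\varepsilon>0$, an open $O\supset\partial W$ with $\theta(O)\le\theta(\partial W)+\varepsilon$. Fixing one compact unit neighbourhood $U_0$ and restricting to $U\subset U_0$, the sets $\partial^U W\setminus O$ form a downward-directed family of compact subsets of $\partial^{U_0}W$, directed because $U_1\cap U_2$ is again a compact unit neighbourhood and $\partial^U W$ decreases with $U$. Their intersection is $(\bigcap_U\partial^U W)\setminus O\subset\partial W\setminus O=\varnothing$, so by the finite intersection property some single member is already empty; that is, $\partial^U W\subset O$ for some compact unit neighbourhood $U$, and therefore $\theta(\partial^U W)\le\theta(O)\le\theta(\partial W)+\varepsilon$, as required.

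The main obstacle I anticipate is that $G$ need not be metrisable, so the compact unit neighbourhoods form only a directed family rather than a decreasing sequence, and monotone continuity of the measure from above is not directly available. This is precisely what the finite-intersection-property step is designed to circumvent, replacing a limiting argument along a sequence by a compactness argument over the directed family; all remaining ingredients are the elementary facts about products of compact and closed sets in topological groups together with outer regularity.
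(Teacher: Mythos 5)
Your proof is correct, and it takes a genuinely different route from the paper's in both halves of the lemma. For compactness, the paper avoids the general fact that the product of a compact and a closed set is closed in a topological group (which you invoke and justify correctly, and which indeed needs no separation axiom); instead it rewrites $U\mathrm{cl}(W^c)\cap \mathrm{cl}(W)=U\bigl(\mathrm{cl}(W^c)\cap U^{-1}\mathrm{cl}(W)\bigr)\cap\mathrm{cl}(W)$, exhibiting the second piece directly as a compact set intersected with a closed one; the two arguments are of comparable weight. The divergence in the measure estimate is more substantial: the paper computes the excess set explicitly as $\partial^U W\setminus\partial W=\bigl(U\mathrm{cl}(W)\cap\mathrm{int}(W^c)\bigr)\cup\bigl(U\mathrm{cl}(W^c)\cap\mathrm{int}(W)\bigr)$, bounds the second piece using an auxiliary neighbourhood $U_0$ with $U_0^{-1}W$ relatively compact, and then invokes the quantitative outer-regularity relation $\inf_{U}\theta(UK\cap B)=\theta(K\cap B)$ for compact $K$ and open $B$, whose proof is only sketched by reference to \cite[Lem.~4.1.3]{de09}. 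You instead prove the purely topological statement $\bigcap_U\partial^U W\subset\partial W$ over the directed family of compact unit neighbourhoods --- your pointwise case analysis is exactly the right fix for the fact that intersection does not distribute over the union $P_U\cup Q_U$ --- and then combine outer regularity with the finite intersection property of the compact sets $\partial^U W\setminus O$ to conclude $\partial^U W\subset O$ for a single $U$. This yields something strictly stronger than the paper's measure inequality: the van Hove boundaries shrink into any prescribed open neighbourhood of $\partial W$, so your estimate holds for any outer regular Borel measure, and it dispenses with the paper's extra hypothesis on $U_0$. Your directed-family argument is also precisely the right substitute for monotone continuity from above in the absence of metrisability, the same compactness mechanism the paper leaves buried inside its $\inf_U\theta(UK\cap B)=\theta(K\cap B)$ relation; what the paper's route buys in exchange is an explicit identity for $\partial^U W\setminus\partial W$ and a reusable measure-theoretic lemma.
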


\begin{proof}
For compact $U\subset G$ and relatively compact $W\subset G$ the set $U\mathrm{cl}(W)\cap \mathrm{cl}(W^c)$ is compact by continuity of the group multiplication.
Compactness of $U\mathrm{cl}(W^c)\cap \mathrm{cl}(W)$ follows by continuity of the group inversion 
and multiplication from
\begin{displaymath}
U\mathrm{cl}(W^c)\cap \mathrm{cl}(W) = U(\mathrm{cl}(W^c)\cap U^{-1} \mathrm{cl}(W) )\cap \mathrm{cl}(W).
\end{displaymath}
For the second claim, take any unit neighbourhood $U_0$ such that $U_0^{-1}W$ is relatively compact.
Let $\mathcal U_0$ denote the collection of all compact unit neighbourhoods $U\subset U_0$. As $\partial W=\partial^{\{e\}}W\subset \partial^UW$, the second statement clearly follows from
\begin{displaymath}
\inf_{U\in\mathcal U_0} \theta(\partial^UW\setminus \partial W)=0.
\end{displaymath}
A calculation shows that for any $U\in\mathcal U_0$ we have
\begin{displaymath}
\begin{split}
\partial^UW\setminus \partial W&=\left(U\mathrm{cl}(W) \cap \mathrm{int}(W^c)\right) \cup \left(U\mathrm{cl}(W^c) \cap \mathrm{int}(W)\right)\\
&\subset\left(U\mathrm{cl}(W) \cap \mathrm{int}(W^c)\right) \cup \left(U(\mathrm{cl}(W^c)\cap \mathrm{cl}(U_0^{-1}\mathrm{int}(W))) \cap \mathrm{int}(W)\right),
\end{split}
\end{displaymath}
where the sets $\mathrm{cl}(W)$ and $\mathrm{cl}(W^c)\cap \mathrm{cl}(U_0^{-1}\mathrm{int}(W))$ are compact. Now the statement follows from the relation $\inf_{U\in\mathcal U_0}\theta(UK\cap B)=\theta(K\cap B)$, which is valid for compact $K\subset G$ and open $B\subset G$. The latter relation expresses outer regularity of the Haar measure and can be proved similarly to \cite[Lem.~4.1.3]{de09}.
\end{proof}

The following theorem gives pattern entropy estimates for weak model sets.

\begin{theorem}\label{thm:main}
Let $\oplam(W)$ be a weak model set in some cut-and-project scheme $(G, H,\mathcal L)$ with non-compact $G$. Then for any van Hove sequence $\mathcal A$ in $G$ the following pattern entropy estimates hold.
\begin{itemize}
\item[(i)]  For every $\xi\in G\times H$ and for every compact unit neighbourhood $U$ we have
\begin{displaymath}
h_{\mathcal A}^*(\oplam(W))\le \limsup_{n\to\infty} f_n(\partial^UW,\xi)\log 2\le\mathrm{dens}(\mathcal L)\,\theta_H(\partial^UW) \log 2,
\end{displaymath}
where the relative point frequency $f_n(\partial^UW,\xi)$ is defined in~\eqref{eq:freq}.
\item[(ii)] We have the estimate
\begin{displaymath}
h_{\mathcal A}^*(\oplam(W))\le \mathrm{dens}(\mathcal L)\, \theta_H(\partial W) \log 2.
\end{displaymath}
\item[(iii)]  Consider any regular model set $\oplam(W_0)$ such that $W\subset W_0$. Then we have the estimate
\begin{displaymath}
h_{\mathcal A}^*(\oplam(W), \oplam(W_0))\le \mathrm{dens}(\mathcal L)\, \theta_H(\partial W)\log 2,
\end{displaymath}
where $h_{\mathcal A}^*(\oplam(W), \oplam(W_0))$ is the pattern entropy of the $01$-colouring of $\oplam(W_0)$ induced by $\oplam(W)$.
\end{itemize}

\end{theorem}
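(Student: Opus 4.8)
The plan is to reduce the whole statement to one combinatorial observation: a centred pattern can change only at those lattice points whose image under the star map lies in the van Hove boundary $\partial^U W$, while the remaining points contribute nothing once the centre is confined to a small set in internal space. Writing $x^{-1}\oplam(W)=\oplam((x^\star)^{-1}W)$, the centred pattern at $x\in\oplam(W)$ is $P(h):=\{\ell\in L\cap A:h\ell^\star\in W\}$ with $h=x^\star$. Fix a compact unit neighbourhood $U\subset H$, which we may take symmetric (otherwise replace $U$ by $U\cap U^{-1}$, which only shrinks $\partial^U W$, hence $f_n(\partial^U W,\xi)$). I would first verify the trichotomy: every $\ell\in L\cap A$ satisfies exactly one of $U^{-1}h\ell^\star\subset\mathrm{int}(W)$, $U^{-1}h\ell^\star\subset\mathrm{int}(W^c)$, or $h\ell^\star\in\partial^U W$; that the first case forces $\ell\in P(h)$ and the second $\ell\notin P(h)$; and, crucially, that if $h\in Uh_0$ then $P(h)$ agrees with $\{\ell:h_0\ell^\star\in W\}$ at every point that is non-boundary for $h_0$. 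All of this is a direct computation from $\partial^U W=(U\mathrm{cl}(W)\cap\mathrm{cl}(W^c))\cup(U\mathrm{cl}(W^c)\cap\mathrm{cl}(W))$ using symmetry of $U$, and it does not use commutativity of $H$.

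Consequently all patterns whose centre lies in a single ball $Uh_0$ are determined by their restriction to $\{\ell\in L\cap A:h_0\ell^\star\in\partial^U W\}$, so that ball contributes at most $2^{B_n(h_0)}$ patterns, where $B_n(h_0)=|\{\ell\in L\cap A_n:h_0\ell^\star\in\partial^U W\}|$. The device that yields the \emph{per-$\xi$} bound is to choose the reference points $h_0$ in the coset $g^{-1}L^\star$ determined by $\xi=(t,g)$: since $L^\star$, and hence $g^{-1}L^\star$, is dense in $H$, finitely many balls $Uh_0^{(1)},\dots,Uh_0^{(m)}$ with $h_0^{(j)}=g^{-1}\ell_j^\star$ cover the compact set $\mathrm{cl}(W)$, with $m=m(U,W,g)$ independent of $n$. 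This gives $N^*_{A_n}(\oplam(W))\le m\cdot 2^{\max_j B_n(h_0^{(j)})}$. Using the identity $\oplam((g^{-1}\ell_j^\star)^{-1}\partial^U W)\cap A_n=\ell_j^{-1}\oplam(g\partial^U W)\cap A_n$, each $B_n(h_0^{(j)})$ equals the count of the single set $\oplam(g\partial^U W)$ in the translated van Hove window $\ell_j A_n$. As $\partial^U W$ is relatively compact (Lemma~\ref{lem:du}), this set is uniformly discrete, so the van Hove property lets me replace $\ell_j A_n$ by $A_n$ at the cost of $o(\theta(A_n))$, and direct-space translation invariance absorbs the $t$-component; taking $\limsup_n$ gives $\limsup_n\theta(A_n)^{-1}\max_j B_n(h_0^{(j)})=\limsup_n f_n(\partial^U W,\xi)$. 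Since $G$ is non-compact we have $\theta(A_n)\to\infty$, so the $\log m$ term drops out, yielding the first inequality of (i); the second inequality is Proposition~\ref{prop:df} applied to the relatively compact window $\partial^U W$.

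Part (ii) then follows by letting $U$ shrink: Lemma~\ref{lem:du} gives $\inf_U\theta_H(\partial^U W)=\theta_H(\partial W)$, so the outer estimate in (i) specialises to $h^*_{\mathcal A}(\oplam(W))\le\mathrm{dens}(\mathcal L)\,\theta_H(\partial W)\log2$. For (iii) I would rerun the argument for the $01$-colouring with centres taken in the regular superset $\oplam(W_0)$. The underlying support $x^{-1}\oplam(W_0)\cap A_n$ now takes at most $N^*_{A_n}(\oplam(W_0))$ values, which is subexponential because $\theta_H(\partial W_0)=0$ forces $h^*_{\mathcal A}(\oplam(W_0))=0$ by parts (i)--(ii); on each fixed support the colouring is again locally constant in the centre off the points with star in $\partial^U W$. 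Multiplying the subexponential support count by the $2^{B_n}$ boundary factor and letting $U\downarrow\{e\}$ gives the claimed bound $\mathrm{dens}(\mathcal L)\,\theta_H(\partial W)\log2$ for the coloured pattern entropy.

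The genuinely delicate step is the passage from the intrinsic per-centre count $\max_j B_n(h_0^{(j)})$ to the single relative frequency $f_n(\partial^U W,\xi)$ claimed in (i) for \emph{every} $\xi$: it rests on the coset-adapted choice $h_0^{(j)}\in g^{-1}L^\star$ (so that the shifted window $g\partial^U W$, rather than $\partial^U W$, is what gets counted), on uniform discreteness of $\oplam(g\partial^U W)$, and on the van Hove estimate applied to the finitely many fixed translating elements $\ell_j$. A secondary point deserving care is that $\partial^U W$ is only relatively compact and typically fails to have almost no boundary, so one must genuinely invoke the weak density formula (Proposition~\ref{prop:df}) and not the regular one for the second inequality.
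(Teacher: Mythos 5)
Your argument is correct, and it shares the paper's engine --- the observation that patterns whose internal-space positions differ by an element of a symmetric $U$ can disagree only on points projected from a translate of $\partial^U W$, the weak density formula (Proposition~\ref{prop:df}) applied to the compact window $\partial^U W$, Lemma~\ref{lem:du} to let $\theta_H(\partial^U W)$ shrink to $\theta_H(\partial W)$, and $\theta_G(A_n)\to\infty$ for non-compact $G$ to discard the finite prefactor --- but you implement the finite decomposition of pattern centres differently. The paper lifts the count to $G\times H$: by Lemma~\ref{lem:smallfd} it fixes a compact $F\subset G$ with $(F\times U)\mathcal L=G\times H$ and moves each centre $(x,x^\star)$ by a lattice element into the strip $F\times UW$; the finitely many direct-space positions $y\in F$ play the role of your finitely many balls, and the boundary count comes out directly as $\left|(FA_n\times\partial^UW)\cap\mathcal L\right|$, so the comparison with $f_n$ needs only $\theta_G(FA_n)=\theta_G(A_n)+o(\theta_G(A_n))$ from Remark~\ref{rem:vh}. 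You instead never leave $H$: you cover $\mathrm{cl}(W)$ by finitely many translates $Uh_0^{(j)}$ with centres in the dense coset $g^{-1}L^\star$, a legitimate substitute for Lemma~\ref{lem:smallfd} (both devices are powered by density of $L^\star$ plus compactness of the window). Your route buys one genuine gain in explicitness: the paper's proof of the first inequality in (i) is written out for $\xi=e$, the general case being left implicit (translate the strip to $tF\times hU$), whereas your coset-adapted centres $h_0^{(j)}=g^{-1}\ell_j^\star$ produce precisely the shifted window $g\,\partial^UW$ and hence $f_n(\partial^UW,\xi)$ for the given $\xi$. The price is the replacement step $\ell_jA_n\to A_n$ (and absorbing $t$), which needs $\left|D'\cap(\ell_jA_n\mathbin{\triangle}A_n)\right|=o(\theta_G(A_n))$ for the uniformly discrete set $D'=\oplam(g\,\partial^UW)$; in the abelian setting of the theorem this is routine via $\ell_jA_n\mathbin{\triangle}A_n\subset\partial^{\{e,\ell_j\}}A_n$ together with $V\partial^KA_n\subset\partial^{VK}A_n$ and the disjointness neighbourhood of $D'$, but it is the one spot demanding extra care in the non-abelian extension of Remark~\ref{rem:comm}, where right-thickening by the discreteness neighbourhood does not mesh automatically with the left-handed definition of $\partial^K A_n$ (the paper's formulation avoids any thickening there). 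Your part (iii) is also sound and only cosmetically different: bounding the support count by $N^*_{A_n}(\oplam(W_0))$, subexponential since (ii) applied to the regular window gives $h^*_{\mathcal A}(\oplam(W_0))=0$, and multiplying by the $2^{B_n}$ colourings per ball is the paper's two-exponent bound with the $\partial^UW_0$ term processed one level earlier.
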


\begin{remarks}\label{rem:comm}
\item
We infer from Theorem~\ref{thm:main} (ii) the result of \cite[Thm.~5]{blr07} that $W$ with
almost no boundary implies pattern entropy zero. Note that our proof below does not use dynamical systems.
\item A regular model set $\oplam(W_0)$ such that $W\subset W_0$
  always exists due to Remark~\ref{rem:anb}. Hence Theorem~\ref{thm:main} (iii) also gives an upper bound on the topological entropy of the dynamical system associated with $\oplam(W)$, compare Remark~\ref{rem:topent}.
\item The theorem remains true for the non-abelian cut-and-project
  schemes of Remark~\ref{rem:gen} (ii). Indeed, the following proof
  also applies to that situation. In particular, Theorem 12 of
  \cite{em68} yields the desired conclusion since $G$ is unimodular. This holds since $G\times H$ is unimodular by \cite[Thm.~9.1.6]{de09} as it contains the lattice $\mathcal L$.
\end{remarks}

\begin{remark}
The following proof relies on a certain geometric construction which may also be useful in related contexts. We analyse the counting problem in $G\times H$ instead of $G$, as one can then exploit the underlying lattice structure more easily. Patterns centred in $x\in \oplam(W)$ will be shifted such that $(x,e)\in G\times H$ is mapped into a given fundamental domain of the lattice. As a consequence, shifted patterns will be ``properly aligned'' in the fundamental domain, and the counting problem simplifies. 
\end{remark}

\begin{proof}[Proof of Theorem~\ref{thm:main}]
Fix any van Hove sequence $\mathcal A=(A_n)_{n\in\mathbb N}$ in $G$.
We first prove the entropy estimates (i), (ii) which deal with the complexity measure 
$N_A^*(\oplam(W))$. 
Consider any compact unit neighbourhood $U$  in $H$ and choose a
compact $F\subset G$ such that $(F\times U)\mathcal L=G\times H$, see
Lemma~\ref{lem:smallfd}.  Since $\pi_G$ is one-to-one on $\mathcal L$,
we may identify non-empty $A_n$-patterns $xA_n\cap\oplam(W)$ of $\oplam(W)$, where
$x\in G$, with the corresponding
lattice subsets $\pi_G^{-1}(xA_n\cap \oplam(W))= (xA_n\times W)\cap
\mathcal L$. With $(y,u)\in F\times U$ and $\widetilde \ell\in\mathcal
L$ such that $(y,u)=\widetilde \ell \, (x,e)$, we have ${\widetilde
  \ell} \, (xA_n\times W)=yA_n \times uW$. Consequently, $(xA_n\times
W)\cap \mathcal L$ is a lattice translate of
\begin{equation}\label{eq:main}
(yA_n\times uW)\cap \mathcal L =  ((yA_n\times (W\setminus \partial^UW))\cap \mathcal L) \, \dot{\cup}\, \mathcal R_n,
\end{equation}
where $\mathcal R_n=\mathcal R_n(x)$ is some subset of $(yA_n\times \partial^U W)\cap \mathcal L$. This decomposition uses
\begin{displaymath}
\begin{split}
(uW)\setminus (W\setminus \partial^UW)&\subset UW \cap (W^c\cup \partial^UW)\subset \partial^U W
\end{split}
\end{displaymath}
and $W\setminus \partial^UW\subset (U\mathrm{cl}(W^c))^c\subset uW$, which is seen by a similar estimate.
Equation~\eqref{eq:main} tells us that any $A_n$-pattern
$xA_n\cap\oplam(W)$ of $\oplam(W)$ with $x\in G$ appears, up to translation, as some
$A_n$-pattern $yA_n\cap \oplam(W\setminus \partial^UW)$ of $\oplam(W\setminus \partial^UW)$
decorated with some subset of the $A_n$-pattern $yA_n\cap
\oplam(\partial^UW)$ of $\oplam(\partial^UW)$. Here, $y$ is restricted to the compact set $F$.

We now count centred $A_n$-patterns of $\oplam(W)$ by counting the corresponding lattice subsets within $G\times H$. 
The above transformation shifts any pattern center $(x,x^\star)\in\mathcal L$ to the pattern center $\widetilde \ell\, (x,x^\star)=(y,u x^\star)\in (F\times UW)\cap\mathcal L$. Whereas this may give rise to infinitely many values $u=u(x)$, there can only be finitely many values $y=y(x)$ since $F$ is compact. But for any such $y$ we can use the above decomposition to bound the number of $A_n$-patterns $(yA_n\times uW)\cap \mathcal L$ due to different values of $u$. We thus obtain the estimate
\begin{displaymath}
\begin{split}
N^*_{A_n}(\oplam(W)) &\le\left|(F\times U W)\cap\mathcal L\right|
\cdot 2^{\left|(FA_n\times \partial^U W)\cap \mathcal L\right|}.
\end{split}
\end{displaymath}
Since $G$ is not compact, we have $\theta_G(A_n)=\theta_G(A_n^{-1})\to\infty$ as $n\to\infty$ by \cite[Thm.~12]{em68}.  Hence the first factor on the rhs of the above inequality cannot contribute to the pattern entropy. With the convention $\log 0=0$, this leads to
\begin{displaymath}
\frac{h_{\mathcal A}^*(\oplam(W))}{\log 2}\le\limsup_{n\to\infty}\frac{\left|(FA_n\times \partial^U W)\cap \mathcal L\right|}{\theta_G(A_n)} =\limsup_{n\to\infty} f_n(\partial^U W),
\end{displaymath}
where we used Remark~\ref{rem:vh} in the last equation: Since we may assume $e\in F$ without loss of generality,  $(FA_n)_{n\in\mathbb N}$ is a van Hove sequence, and moreover $\theta_G(A_n)=\theta_G(FA_n)+o(\theta_G(A_n))$ as $F$ is compact.  Since $\partial^U W$ is compact by Lemma~\ref{lem:du}, an application of the density formula (Proposition~\ref{prop:df}) for $\oplam(\partial^UW)$ proves (i). 

For (ii) fix arbitrary $\varepsilon>0$. We can apply Lemma~\ref{lem:du} to find a compact unit neighbourhood $U$ such that $\theta_H(\partial^U W)\le\theta_H(\partial W)+\varepsilon$. As $\varepsilon>0$ was arbitrary, claim (ii) follows from (i).

For (iii) consider any regular model set $\oplam(W_0)$ such that $W\subset W_0$. In order to derive the entropy bound for $h_{\mathcal A}^*(\oplam(W),\oplam(W_0))$ we proceed as above and bound the number $N_{A_n}^*(\oplam(W),\oplam(W_0))$. Since we can analyse pattern centers of colour $1$ and of colour $0$ separately, we obtain  
\begin{displaymath}
\begin{split}
N^*_{A_n}(\oplam(W),\,&\oplam(W_0)) \le \left|F\cap\oplam( UW)\right|\cdot 2^{\left|FA_n\cap\oplam(\partial^U W)\right|+\left|FA_n\cap\oplam( \partial^U W_0)\right|}\\
&+\left|F\cap \oplam(UW_0)\right|\cdot 2^{\left|FA_n\cap\oplam(\partial^U W)\right|+\left|FA_n\cap\oplam(\partial^U W_0)\right|}.
\end{split}
\end{displaymath}
Since $W_0$ has almost no boundary, we obtain the same estimate as in (ii) above.
\end{proof}

\section{Subsets of a lattice}\label{sec:sl}

We discuss a cut-and-project scheme that is naturally associated to
subsets of the lattice $\mathbb Z^n\subset \mathbb R^n$. Whereas an
adelic version already appeared in \cite{me73, bmp00, bm02}, we
use the simpler formulation from \cite[Sec.~5a]{sing07} which is sufficient for our needs.

\subsection{The cut-and-project scheme}

For fixed $k\in\mathbb N$ consider $H=\prod_{p} (\mathbb Z^n/ p^k\mathbb Z^n)$, where here
and below $p$
runs through all primes, 
together with the canonical group structure inherited from its
factors. We equip $H$ with the product topology with respect to the
discrete topology on the finite quotient groups $\mathbb Z^n/ p^k\mathbb
Z^n$ of order $p^{nk}$. Thus $H$ is a second countable compact abelian group. We choose as Haar
measure on $H$ the product measure inherited from its factors.
Define a star-map $\star:\mathbb Z^n\to H$ by
\begin{displaymath}
x \mapsto x^\star=(x\mod p^k\mathbb Z^n)_p,
\end{displaymath}
where $x\mod p^k\mathbb Z^n$ is the image of $x$ under the canonical
projection $\pi_p\!:\,\mathbb Z^n\rightarrow \mathbb Z^n/p^k\mathbb
Z^n$ onto the quotient group.
It is not difficult to show that $(\mathbb Z^n, H, \iota(\mathbb
Z^n))$, where $\iota\!:\, \mathbb Z^n\rightarrow \mathbb Z^n\times H$
is the ``diagonal'' embedding $x\mapsto (x,x^\star)$, is
a cut-and-project scheme. As the restriction $\pi_H|_{\mathcal L}$ resp.\ the star-map are one-to-one, every subset of $\mathbb Z^n$ is a weak model set with the same internal space.

\subsection{$k$-free lattice points}

The \textit{$k$-free lattice points} $V(k,n)$ of $\mathbb Z^n$ \cite{ph13} are
given by
$$
V(k,n)=\mathbb Z^n\setminus\bigcup_p p^k\mathbb Z^n.
$$ 
To avoid the trivial case $V(1,1)=\{\pm1\}$, we assume that $nk>1$. The set $V(1,n)$ is called the \textit{visible lattice points}. Let us mention that the \textit{squarefree numbers} $V(2,1)$ 
have been constructed as a projection set already in \cite{me73}, where it was also proven that $V(k,1)$ has holes of arbitrary size \cite[Lem.~1]{me73}. In fact the $k$-free lattice
points are weak model sets. Indeed, with respect to the above
cut-and-project scheme one has
$V(k,n)=\oplam(W)$, where 
$$
W=\prod_p (\mathbb Z^n/ p^k\mathbb Z^n)\setminus\{0\!\!\!\!\mod p^k\mathbb Z^n\}.
$$
Since no component of $W$ is maximal, we also have $\mathrm{int}(W)=\varnothing$. Note that $W$ is closed as every component of $W$ is closed. We conclude that $W$ is nowhere dense.
Consequently,  the set of $k$-free lattice points is hole-repetitive
by Proposition~\ref{prop:hrep}. In fact, holes repeat lattice
periodically by the Chinese remainder theorem; see the proof of
\cite[Prop.~1]{ph13}. Also $\mathrm{cl}(W)=W$ since every component is closed. This means that $W=\partial W$. 
The Haar measure of $W$ is given by
\begin{displaymath}
\theta_H(W)=\prod_{p} \left(1-\frac{1}{p^{nk}}\right)=\frac{1}{\zeta(nk)}>0,
\end{displaymath}
which coincides with the density of the $k$-free lattice points \cite{bmp00,ph13}. For $k$-free lattice points and for any centred van Hove sequence $\mathcal A$ of cubes, also the upper bound in the estimate of $h_{\mathcal A}^*(\oplam(W),\mathbb Z^n)$
is attained: Let $A$ be any compact set in $\mathbb Z^n$. Noting
that for these models every subset of a coloured $A$-pattern is a coloured $A$-pattern
\cite[Thm.~2]{ph13}, we clearly have
\begin{displaymath}
N^*_A(\oplam(W),\mathbb Z^n)\ge 2^{|(A\times W)\cap \mathcal L|}.
\end{displaymath}
The claim follows by noting that the latter estimate implies
\begin{displaymath}
h^*_{\mathcal A}(\oplam(W),\mathbb Z^n)\ge \underline{\mathrm{dens}}(V(k,n)) \log 2=\theta_H(W)\log2,
\end{displaymath}
where the last equality uses that the density exists and is given by the Haar measure of the window. 

\subsection{Complementary lattice subsets}

Let $S\subset\mathbb Z^n$ be any lattice subset and $S^c=\mathbb Z^n\setminus S$ be its lattice complement. Then both sets have the same pattern entropy, i.e., $h_{\mathcal A}^*(S,\mathbb Z^n)=h^*_{\mathcal A}(S^c,\mathbb Z^n)$ for every centred van Hove sequence $\mathcal A$.
This is clear after identifying $S$ with the $01$-colouring of $\mathbb Z^n$ induced by $S$. The points of the complementary lattice subset are then obtained by colour inversion. But inverting colours does not affect pattern counting. Hence the pattern entropies coincide. An example is given by the set $S^c$ of \textit{invisible lattice points} \cite{b02, bg13} in $\mathbb Z^n$ which is the lattice complement of the set $S=V(1,n)$  of visible lattice points. The set of invisible lattice points is a model set, the window being the complement of the window of
visible lattice points. Hence the set of invisible lattice points is
Delone, since the window is non-empty, open and relatively compact. As argued
above, the pattern entropy coincides with that of the visible lattice
points, and the pattern entropy bound of Theorem~\ref{thm:main} (iii)
is sharp.

\section*{Acknowledgements}

This work was initiated during a visit of one of the authors to Bielefeld University in spring 2014. Support by the German Research Council (DFG) within the CRC 701 ``Spectral Structures and Topological Methods in Mathematics'' is gratefully acknowledged. Results of this paper have been presented \cite{r14} during the Mini-Workshop ``Dynamical versus Diffraction Spectra in the Theory of Quasicrystals'' at the MFO in Oberwolfach in December 2014. We thank the participants for discussions and Michael Baake for helpful comments on the manuscript. We also thank the referee for a careful reading, comments and a correction.

\end{document}